\newtheorem{df}{Definition}[section]
\newtheorem{thm}{Theorem}[section]
\newtheorem{lem}{Lemma}[section]
\newtheorem{claim}{Claim}[section]
\newtheorem{cor}{Corollary}[section]
\newenvironment {proof} {\noindent{\em Proof.}}{\hspace*{\fill}$\Box$\par\vspace{4mm}}
\def\qed{\hfill \nopagebreak\rule{5pt}{8pt}}
\title{Proper connection number of random graphs\footnote{Supported by NSFC No.11371205, ``973" program No.2013CB834204, and PCSIRT.}}
\author{Ran Gu, Xueliang Li, Zhongmei Qin\\
{\small  Center for Combinatorics and LPMC-TJKLC}\\
{\small Nankai University, Tianjin 300071, P.R. China}\\
{\small Email: guran323@163.com, lxl@nankai.edu.cn, qinzhongmei90@163.com}\\
}
\date{}
\begin{document}
\maketitle
\begin{abstract}
A path in an edge-colored graph is called a proper path if no two adjacent edges of the path are colored the same. For a connected graph $G$, the proper connection number $pc(G)$ of $G$ is defined as the minimum number of colors needed to color its edges, so that every pair of distinct vertices of $G$ is connected by at least one proper path in $G$. In this paper, we show that almost all graphs have the proper connection number 2. More precisely, let $G(n,p)$ denote the Erd\"{o}s-R\'{e}nyi random graph model, in which each of the $\binom{n}{2}$ pairs of vertices appears as an edge with probability $p$ independent from other pairs. We  prove that for sufficiently large $n$, $pc(G(n,p))\le2$ if $p\ge\frac{\log n +\alpha(n)}{n}$, where $\alpha(n)\rightarrow \infty$.  \\[2mm]
\textbf{Keywords:} proper connection number; proper-path coloring; random graphs.\\
\textbf{AMS subject classification 2010:} 05C15, 05C40, 05C80.\\
\end{abstract}

\section{Introduction}

All graphs in this paper are undirected, finite and simple. We follow \cite{BM} for graph theoretical notation and terminology not defined here. Let $G$ be a nontrivial connected graph with an {\it edge-coloring} $c : E(G)\rightarrow \{1, 2, \ldots, t\},
\ t \in \mathbb{N}$, where adjacent edges may have the same color. A path of $G$ is called a {\it rainbow path} if no two edges on the path have the same color. The graph $G$ is called {\it rainbow connected} if for any two vertices of $G$ there is a rainbow path of $G$ connecting them. An edge-coloring of a connected graph is called a {\it rainbow connecting coloring} if it makes the graph rainbow connected. For a connected graph $G$, the \emph{rainbow connection number} $rc(G)$ of $G$ is the smallest number of colors that are needed in order to make $G$ rainbow connected.   This concept of rainbow connection of graphs was introduced by Chartrand et al. \cite{CJMZ} in 2008. The interested readers can see \cite{LS,LSS} for a survey on this topic.

Motivated by rainbow coloring and proper coloring in graphs, Andrews et al. \cite{ALLZ} introduced the concept of proper-path coloring. Let $G$ be a nontrivial connected graph with an edge-coloring. A path in $G$ is called a \emph{proper path} if no two adjacent edges of the path are colored the same. An edge-coloring of a connected graph $G$ is a \emph{proper-path coloring} if every pair of distinct
vertices of $G$ are connected by a proper path in $G$. For a connected graph $G$, the minimum number of colors that are needed to produce a proper-path coloring of $G$ is called the \emph{proper connection number} of $G$, denoted by $pc(G)$. From the definition, it follows that $1\le pc(G)\le min\{ rc(G), \chi'(G)\}\le m$, where $\chi'(G)$ is the chromatic index of $G$ and $m$ is the number of edges of $G$. And it is easy to check that $pc(G) = 1$ if and only if $G = K_n$, and $pc(G) = m$ if and only if $G = K_{1,m}$. For more details we refer to \cite{ALLZ, BFGMMMT}.

The study on rainbow connectivity of random graphs has attracted the interest of many researchers, see \cite{Caro,FT,HL}. It is worth   investigating the proper connection number of random graphs, which is  the purpose of this paper. The most frequently occurring probability model of random graphs is the Erd\"{o}s-R\'{e}nyi random graph model $G(n,p)$ \cite{ER}. The model $G(n,p)$ consists of all graphs with
$n$ vertices in which the edges are chosen independently and with
probability $p$. We say an event $\mathcal{A}$ happens
\textit{with high probability} if the probability that it happens approaches $1$ as $n\rightarrow \infty $, i.e., $Pr[\mathcal{A}]=1-o_n(1)$. Sometimes, we say \textit{w.h.p.} for short.
We will always assume that $n$ is the variable that tends to infinity.

Let $G$ and $H$ be two graphs on $n$ vertices. A property $P$ is said to be \emph{monotone} if whenever $G\subseteq H$ and $G$ satisfies $P$, then $H$ also satisfies $P$. For any property $P$ of graphs and
any positive integer $n$, define $Prob(P, n)$ to be the ratio of the number of graphs with $n$ labeled vertices having $P$ divided by the total number of graphs with these vertices. If $Prob(P, n)$ approaches 1 as $n$ tends to infinity, then we say that almost all graphs have the property $P$. Similarly, for a fixed integer $r$, we say that almost all $r$-regular graphs have the property $P$ if the ratio of the number
of $r$-regular graphs with $n$ labeled vertices having $P$ divided by the total number of $r$-regular graphs with these vertices approaches 1 as $n$ tends to infinity.

There are many results in the literature asserting that
almost all graphs have some property. Here we list some of them, which are related to our study on the proper connection number of random graphs.
\begin{thm}\label{thad}\cite{BH}
Almost all graphs are connected with diameter 2.
\end{thm}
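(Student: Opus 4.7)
The plan is to work inside the Erd\H{o}s--R\'enyi model $G(n,1/2)$, which by the definition of ``almost all graphs'' given above is exactly the uniform distribution on labeled graphs with $n$ vertices (each of the $2^{\binom{n}{2}}$ possible graphs is equally likely iff each edge is included independently with probability $1/2$). I will prove the slightly stronger statement that with high probability every pair of distinct vertices has a common neighbour; this immediately yields both connectivity and diameter at most $2$. Since $\Pr[G = K_n] = 2^{-\binom{n}{2}} \to 0$, the diameter will in fact be exactly $2$ w.h.p.

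First I would fix an arbitrary pair $u,v$ of distinct vertices. For any third vertex $w$, the potential edges $uw$ and $vw$ are distinct, so the events $\{uw\in E\}$ and $\{vw\in E\}$ are independent Bernoulli$(1/2)$ variables, and hence $w$ is a common neighbour of $u$ and $v$ with probability $1/4$. As $w$ varies over the $n-2$ remaining vertices, the events ``$w$ is a common neighbour of $u$ and $v$'' depend on disjoint pairs of potential edges, so they are mutually independent. Therefore
$$\Pr\bigl[u,v\text{ have no common neighbour}\bigr]=\Bigl(\tfrac{3}{4}\Bigr)^{n-2}.$$
A union bound over the $\binom{n}{2}$ pairs $\{u,v\}$ then gives
$$\Pr\bigl[\exists\,u,v\text{ with no common neighbour}\bigr]\le \binom{n}{2}\Bigl(\tfrac{3}{4}\Bigr)^{n-2},$$
which tends to $0$ as $n\to\infty$, so w.h.p.\ every two vertices share a neighbour and the graph has diameter $2$.

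The argument is just a first-moment/union-bound calculation, so there is no deep obstacle. The main technical point, though a minor one, is the \emph{mutual} (not merely pairwise) independence of the $n-2$ common-neighbour events used above; this is immediate from the product structure of $G(n,1/2)$ since the events involve disjoint sets of edge-slots, but it is the only place in the proof where any care is needed. Everything else reduces to the estimate $\binom{n}{2}(3/4)^{n-2} = o(1)$, which is elementary.
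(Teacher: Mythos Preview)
Your argument is correct: in $G(n,1/2)$ the probability that a fixed pair has no common neighbour is $(3/4)^{n-2}$, the union bound over $\binom{n}{2}$ pairs gives $o(1)$, and diameter at most $2$ (in fact exactly $2$, since $K_n$ has vanishing probability) together with connectivity follow immediately. The independence observation you flag is indeed the only point requiring any care, and it is clear.

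There is nothing to compare against, however: the paper does not supply its own proof of this theorem. It is quoted as a known result from Blass and Harary \cite{BH} and used as a black box (together with Theorems~\ref{thac} and~\ref{thd2}) to deduce Theorem~\ref{thap}. Your first-moment/union-bound proof is the standard textbook argument for this fact and is essentially what one finds in the original source.
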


\begin{thm}\label{thac}\cite{BH}
For every nonnegative integer $k$, almost all graphs are $k$-connected.
\end{thm}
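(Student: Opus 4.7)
The plan is to show that in the uniform random graph model $G(n, 1/2)$ (which matches the density interpretation of ``almost all graphs'' given in the excerpt), the probability that $G$ fails to be $k$-connected tends to $0$ as $n \to \infty$, for every fixed $k \ge 0$. The case $k = 0$ is vacuous, so assume $k \ge 1$ and $n > k$. The key reduction is that $G$ fails to be $k$-connected if and only if there exists $S \subseteq V(G)$ with $|S| \le k-1$ such that $G - S$ is disconnected. I would bound the failure probability by a double union bound: first over the separator $S$, then over the bipartitions of $V(G)\setminus S$ that witness the disconnection.

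For a fixed $S$ with $|S| = s \le k-1$, the subgraph $G - S$ is disconnected precisely when some nonempty proper subset $A \subseteq V(G) \setminus S$ has no edges to its complement $B = (V(G) \setminus S) \setminus A$ in $G$. Each of the $|A|\cdot|B|$ potential edges between $A$ and $B$ is independently absent with probability $1/2$, so a union bound over $A$ gives
\[
\Pr[G - S \text{ is disconnected}] \;\le\; \sum_{a=1}^{n-s-1} \binom{n-s}{a}\, 2^{-a(n-s-a)}.
\]
The dominant contribution comes from $a = 1$ (an isolated vertex in $G - S$) and equals $(n-s)\cdot 2^{-(n-s-1)}$; all remaining terms are exponentially smaller, by a standard geometric comparison of consecutive summands.

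Combining with a union bound over $S$ yields
\[
\Pr[G \text{ is not } k\text{-connected}] \;\le\; \sum_{s=0}^{k-1} \binom{n}{s}\cdot O\!\left(n\cdot 2^{-(n-s-1)}\right) \;=\; O\!\left(n^{k}\cdot 2^{-n}\right) \;=\; o(1),
\]
since $k$ is a fixed constant. The main technical point is verifying that the $a = 1$ term dominates the inner sum: writing $T_a = \binom{n-s}{a}\, 2^{-a(n-s-a)}$, the ratio $T_{a+1}/T_a = \tfrac{n-s-a}{a+1}\cdot 2^{-(n-s-2a-1)}$ is $o(1)$ for $a \le (n-s)/2$ and large $n$, and the sum is symmetric under $a \leftrightarrow n-s-a$, so the tail contributes only a lower-order correction. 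No sophisticated probabilistic machinery (second moment, concentration, martingales) is required; the whole argument is a clean first-moment union bound, essentially the same as the classical proof that $G(n,1/2)$ is w.h.p.\ connected, upgraded by one extra layer of union bound to handle separators instead of empty cuts.
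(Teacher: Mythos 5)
Your proof is correct, but note that the paper itself offers no proof of this statement: it is quoted verbatim from Blass and Harary \cite{BH}, so there is no in-paper argument to match. Your route is the direct first-moment one: a union bound over separators $S$ with $|S|\le k-1$ and, inside that, over the bipartitions of $V\setminus S$ witnessing disconnection, with the $a=1$ term $(n-s)2^{-(n-s-1)}$ dominating. This is a perfectly sound and self-contained argument in $G(n,1/2)$, which is indeed the right model for the paper's counting definition of ``almost all graphs.'' The classical Blass--Harary proof goes differently: they show that almost all graphs satisfy an extension axiom (for all disjoint sets $A,B$ with $|A|,|B|$ bounded, some vertex is adjacent to everything in $A$ and nothing in $B$), and then derive $k$-connectedness deterministically --- given a separator $S$, $|S|\le k-1$, and $u,v$ in different components of $G-S$, apply the axiom with $A=\{u,v\}$, $B=S$ to get a $u$--$v$ path of length $2$ avoiding $S$. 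The extension-axiom approach buys more: one probabilistic lemma simultaneously yields $k$-connectedness, diameter $2$ (the paper's Theorem 1.1), and in fact every first-order almost-sure property; your approach is more elementary and gives an explicit $O(n^k 2^{-n})$ failure bound. One cosmetic slip: the ratio $T_{a+1}/T_a=\frac{n-s-a}{a+1}2^{-(n-s-2a-1)}$ is \emph{not} $o(1)$ all the way up to $a=(n-s)/2$ (at $a$ within $O(\log n)$ of the midpoint the exponent $n-s-2a-1$ is $O(\log n)$ or smaller, and right at the midpoint the ratio is about $1/2$), but this does not hurt you: for $a$ in that middle range $T_a\le 2^{n-s}2^{-a(n-s-a)}=2^{-\Omega(n^2)}$, so those terms are negligible and $T_1$ still dominates the sum. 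You should state the domination that way rather than via a uniform $o(1)$ ratio claim.
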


\begin{thm}\label{thar}\cite{RW}
For fixed integer $r\ge 3$, almost all $r$-regular graphs are Hamiltonian.
\end{thm}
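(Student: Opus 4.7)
The plan is to work in the configuration (pairing) model of random regular graphs: place $rn$ points in $n$ groups of $r$, choose a uniformly random perfect matching on the points, and identify each group with a vertex. The resulting multigraph, conditioned on being simple, is uniform over $r$-regular graphs on $n$ labelled vertices; for fixed $r\ge 3$ this conditioning probability is bounded away from $0$ as $n\to\infty$, so it suffices to show that a random pairing is Hamiltonian w.h.p.

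I would first compute the first moment $\mathbb{E}[X]$, where $X$ counts Hamilton cycles of the pairing: a standard enumeration together with Stirling's formula gives an exponentially growing $\mathbb{E}[X]$ for every $r\ge 3$. A naive second-moment method, however, fails: $\mathbb{E}[X^2]/\mathbb{E}[X]^2$ tends to a constant strictly larger than $1$. The excess variance is explained by positive correlation between Hamilton cycles and the counts $Y_k$ of short $k$-cycles --- pairings rich in short cycles tend to carry more Hamilton cycles than the average.

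The genuine obstacle, and main step, is to bypass this failure via the \emph{small subgraph conditioning method} of Robinson and Wormald. One verifies three ingredients: (i) the counts $(Y_3,Y_4,\ldots)$ converge jointly to independent Poisson variables with explicit means $\lambda_k=(r-1)^k/(2k)$; (ii) for every fixed $t$ there are explicit constants $\delta_k$ such that $\mathbb{E}[X\mid Y_3,\ldots,Y_t]/\mathbb{E}[X]$ converges to $\prod_{k=3}^{t}(1+\delta_k)^{Y_k}\exp(-\lambda_k\delta_k)$; and (iii) the limit of $\mathbb{E}[X^2]/\mathbb{E}[X]^2$ equals the convergent product $\prod_{k\ge 3}(1+\lambda_k\delta_k^2)$. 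Given (i)--(iii), the Robinson--Wormald machinery yields that $X/\mathbb{E}[X]$ converges in distribution to an almost surely positive random variable, whence $\Pr[X>0]\to 1$; transferring back to the uniform model via the conditioning probability finishes the proof.

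The technical work behind (i)--(iii) reduces to asymptotic enumeration of pairings that contain a prescribed union of vertex-disjoint short cycles together with zero, one, or two Hamilton cycles, carried out by the standard switching/pairing calculus. Step (iii) is the delicate one, since one must keep error terms uniform across all admissible short-cycle configurations and across all patterns of overlap between the two Hamilton cycles; that the resulting product exactly reproduces the excess ratio found in the failed second-moment attempt serves as a reassuring internal consistency check.
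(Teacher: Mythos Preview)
The paper does not give its own proof of this theorem; it is quoted with citation from Robinson and Wormald \cite{RW} and used as a black box. Your outline is a faithful high-level summary of exactly the method Robinson and Wormald introduced and applied in that paper (the small subgraph conditioning method in the configuration model), so your proposal matches the cited source rather than diverging from it.
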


In \cite{BFGMMMT}, Borozan et al. got the following result.
\begin{thm}\label{thd2}
If the diameter of graph $G$ is 2 and $G$ is 2-connected, then $pc(G)=2$.
\end{thm}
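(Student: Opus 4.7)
The plan is to construct an explicit $2$-edge-coloring $c:E(G)\to\{1,2\}$ using the structural information supplied by $2$-connectivity, and then to invoke the diameter-$2$ hypothesis when verifying that every pair of vertices is joined by a proper path. My primary tool would be an open ear decomposition, which exists by Whitney's theorem since $G$ is $2$-connected: write $G=C_0\cup P_1\cup P_2\cup\cdots\cup P_k$, where $C_0$ is a cycle and each $P_i$ is an open ear attached at two (possibly coinciding, in the closed case) vertices of $C_0\cup P_1\cup\cdots\cup P_{i-1}$.

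For the base step, I would color the edges of $C_0$ so that $C_0$ itself is properly-path-connected on two colors. If $|C_0|$ is even, alternate $1,2,1,2,\ldots$; if $|C_0|$ is odd, alternate and duplicate one color on a single pair of consecutive edges; if $|C_0|=3$ (so that $C_0$ is already $K_3$), any $2$-coloring works. In either case each vertex of $C_0$ is incident to at least one edge of each color used on $C_0$, except possibly at the isolated duplication, which is something I would need to track.

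For the inductive step, when I add an ear $P_i=v_0v_1\cdots v_\ell$ with $v_0,v_\ell$ already colored, I would color $e_j=v_{j-1}v_j$ alternately along $P_i$; the two remaining degrees of freedom are the starting color at $v_0v_1$ and (when $\ell$ is even) the parity matching at $v_{\ell-1}v_\ell$. I would choose these so that, at the attachment vertex $v_0$, the color of $v_0v_1$ differs from the color of some designated edge already incident to $v_0$ (and similarly at $v_\ell$). This guarantees that $P_i$ is a proper path, and that paths leaving $P_i$ at either endpoint into the previously colored part can be continued properly. A small separate case handles a length-$1$ ear (a chord), where one simply picks whichever of the two colors is needed to preserve the required balance at one endpoint.

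The main obstacle, and the place where the diameter-$2$ assumption must enter, is the final verification. Adjacent pairs are trivial. For a non-adjacent pair $u,v$ we have $d_G(u,v)=2$, so there is a common neighbor $w$; if some common neighbor satisfies $c(uw)\neq c(wv)$ we are done. The hard case is when every common neighbor of $u$ and $v$ sees both $u$ and $v$ through equally-colored edges. Here I would exploit $2$-connectivity to produce a second internally disjoint $u$-$v$ path, which—because $G$ has diameter $2$—must also be short and hence interacts controllably with the ear structure. I expect this case analysis (showing that the ear-based coloring rules out this pathological monochromatic configuration, or else produces a length-$3$ proper alternative through a different ear) to be the technical heart of the argument, and to be the step that genuinely uses the diameter-$2$ hypothesis rather than only $2$-connectivity.
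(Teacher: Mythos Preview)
This theorem is not proved in the paper at all; it is quoted from Borozan et al.\ (reference~\cite{BFGMMMT}) and used as a black box to derive Theorem~\ref{thap}. So there is no argument here against which to compare your proposal.

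Assessing your plan on its own merits: the step you yourself flag as ``the technical heart'' is not carried out, and the sketch you give for it rests on a wrong inference. You say that a second internally disjoint $u$--$v$ path supplied by $2$-connectivity ``because $G$ has diameter $2$ \ldots must also be short.'' That is false: the diameter hypothesis says only that \emph{some} $u$--$v$ path has length at most $2$; it says nothing about the lengths of the two paths coming from Menger's theorem, either of which may be long. And nothing in your ear-coloring rules forces a long path through several ears to be proper---indeed, an odd cycle $C_0$ already has two consecutive equally colored edges under your scheme, and the freedom at the two ends of an even ear cannot be exercised independently, so you cannot in general match colors at both attachment points. Thus the ear decomposition (used to build the coloring) and the diameter-$2$ hypothesis (used only in verification) do not mesh in the way you hope: the place where they must interact is exactly where your argument is missing, and the proposed bridge between them does not hold. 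If you wish to pursue this route, the standard way forward is to aim for a coloring with the so-called \emph{strong} property (between any two vertices there are proper paths beginning with each color), which is what the Borozan et al.\ argument in \cite{BFGMMMT} ultimately exploits; a bare alternating coloring along an ear decomposition does not give that.
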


The authors in \cite{ALLZ} proved the following result.
\begin{thm}\label{th0}
If $G$ is not complete and has a Hamiltonian path, then $pc(G)=2$.
\end{thm}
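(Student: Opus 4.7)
The plan is straightforward. The lower bound $pc(G)\ge 2$ is free: the introduction records that $pc(G)=1$ if and only if $G=K_n$, and by hypothesis $G$ is not complete. So the entire argument reduces to producing a proper-path coloring of $G$ using exactly two colors.

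For the upper bound I would build the coloring around the Hamiltonian path itself. Let $P=v_1v_2\cdots v_n$ be a Hamiltonian path of $G$, and set $e_i=v_iv_{i+1}$ for $1\le i\le n-1$. I color $e_i$ with color $1$ when $i$ is odd and with color $2$ when $i$ is even, so that consecutive edges of $P$ always receive distinct colors. The remaining edges of $G$ (the chords of $P$, if any) are assigned an arbitrary color in $\{1,2\}$; their choice is immaterial to what follows.

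Verification is then immediate: for any two distinct vertices $v_i,v_j$ of $G$, assumed without loss of generality to satisfy $i<j$, the subpath $v_iv_{i+1}\cdots v_j$ of $P$ is a proper path, because along it the colors alternate strictly between $1$ and $2$. Hence every pair of vertices is joined by a proper path in this $2$-edge-coloring, and $pc(G)\le 2$, which together with the lower bound gives $pc(G)=2$.

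The main obstacle here is conceptual rather than technical: one must notice that alternating two colors along the Hamiltonian path already provides a proper path between every pair of vertices, so that the chord edges of $G$ play no role in the argument and may be colored freely. There is no case analysis, counting, or probabilistic input; the whole argument rests on this single structural observation together with the $pc=1$ characterization already quoted in the introduction.
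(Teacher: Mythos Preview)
Your argument is correct: alternating two colors along a Hamiltonian path gives a proper path between every pair of vertices, and the characterization $pc(G)=1\iff G=K_n$ supplies the lower bound. Note, however, that the paper does not prove this statement itself; it is quoted from \cite{ALLZ} without proof, so there is no in-paper argument to compare against. Your proof is the standard one and is exactly what one would expect the original source to contain.
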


From Theorem \ref{thad} and Theorem \ref{thac} and the formula that $Pr[A\cap B]=Pr[A]+Pr[B]-Pr[A\cup B]$, it is easy to derive that almost all graphs are 2-connected with diameter 2. Hence, by Theorem \ref{thd2}, we have
\begin{thm}\label{thap}
Almost all graphs have the proper connection number 2.
\end{thm}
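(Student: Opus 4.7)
The plan is essentially to combine the three earlier results (Theorems \ref{thad}, \ref{thac}, \ref{thd2}) via a short probabilistic argument. First I would let $A$ be the event that a graph on $n$ labeled vertices has diameter $2$ and let $B$ be the event that it is $2$-connected. By Theorem \ref{thad}, $Pr[A] = 1 - o_n(1)$, and by Theorem \ref{thac} applied with $k = 2$, $Pr[B] = 1 - o_n(1)$.

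Next I would apply the inclusion-exclusion identity $Pr[A \cap B] = Pr[A] + Pr[B] - Pr[A \cup B]$ that the authors have already flagged. Since $Pr[A \cup B] \le 1$, this gives $Pr[A \cap B] \ge Pr[A] + Pr[B] - 1 = 1 - o_n(1)$, so almost all graphs are simultaneously $2$-connected and of diameter $2$. Applying Theorem \ref{thd2} to every graph $G$ in this high-probability class yields $pc(G) \le 2$.

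For the matching lower bound, I would note that $pc(G) = 1$ holds only for complete graphs, and in the uniform model on labeled graphs the single graph $K_n$ contributes probability $2^{-\binom{n}{2}} = o_n(1)$; hence $pc(G) \ge 2$ almost surely. Combining the two bounds gives $pc(G) = 2$ for almost all graphs, as claimed.

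I do not anticipate a real obstacle: the cited theorems do all of the heavy lifting, and the only step that requires any care is making sure the model used in the statement ``almost all graphs'' is the uniform one (equivalently $G(n,1/2)$), so that the hypotheses of Theorems \ref{thad} and \ref{thac} apply and so that the trivial event $\{G = K_n\}$ has vanishing probability. Once this is observed, the argument is a one-line union bound followed by an invocation of Theorem \ref{thd2}.
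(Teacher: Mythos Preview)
Your proposal is correct and follows essentially the same argument the paper sketches just before the theorem: combine Theorems \ref{thad} and \ref{thac} via $Pr[A\cap B]=Pr[A]+Pr[B]-Pr[A\cup B]$ and then invoke Theorem \ref{thd2}. Your separate lower-bound paragraph is harmless but unnecessary, since Theorem \ref{thd2} already asserts $pc(G)=2$ (and a graph of diameter exactly $2$ is automatically non-complete).
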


Even if we concentrate on regular graphs, from Theorem \ref{thar} and  Theorem \ref{th0}, we also have the following result.
\begin{thm}
For fixed integer $r\ge 3$, almost all $r$-regular graphs have the proper connection number 2.
\end{thm}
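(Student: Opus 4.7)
The plan is to combine Theorem~\ref{thar} with Theorem~\ref{th0} in essentially one step, after observing that an $r$-regular graph on many vertices cannot be complete. More precisely, I would proceed as follows.

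First, I fix $r\ge 3$ and note that since $r$ is a constant while $n\to\infty$, every $r$-regular graph $G$ on $n$ vertices with $n\ge r+2$ satisfies $G\ne K_n$, because $K_n$ is $(n-1)$-regular. Hence, for all sufficiently large $n$, no $r$-regular graph on $n$ vertices is complete; in particular, the property ``is not complete'' holds for $100\%$ of $r$-regular graphs in the limit.

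Second, by Theorem~\ref{thar}, almost all $r$-regular graphs are Hamiltonian, so the family $\mathcal{H}_n$ of $r$-regular graphs on $n$ labeled vertices containing a Hamilton cycle constitutes a $(1-o_n(1))$-fraction of all $r$-regular graphs on $n$ labeled vertices. A Hamilton cycle trivially contains a Hamilton path, so every $G\in\mathcal{H}_n$ has a Hamilton path.

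Finally, combining the two facts above, for sufficiently large $n$ almost all $r$-regular graphs $G$ on $n$ vertices satisfy simultaneously (i) $G$ is not complete and (ii) $G$ has a Hamilton path. By Theorem~\ref{th0}, such a $G$ has $pc(G)=2$, which yields the claim.

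There is essentially no obstacle here: the result is a direct corollary of two theorems already at our disposal, and the only thing to check carefully is that ``not complete'' is automatic for fixed $r$ once $n$ is large, so that Theorem~\ref{th0} can be applied without any extra probabilistic work beyond what Theorem~\ref{thar} already gives.
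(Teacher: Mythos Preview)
Your proposal is correct and is exactly the approach the paper takes: the theorem is stated as an immediate consequence of Theorem~\ref{thar} and Theorem~\ref{th0}, with the only extra observation being that an $r$-regular graph on $n>r+1$ vertices is not complete. You have simply written out the details of what the paper leaves implicit.
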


Next, we study the value of the proper connection number of $G(n,p)$, when $p$ belongs to different ranges.
The following theorem is a classical result on the connectedness of a random graph.
\begin{thm}\label{ppt}\cite{ER}
Let $p=(\log n +a)/n$. Then
\begin{equation*}
Pr[G(n,p)\  is \ connected]\rightarrow
\left\{
  \begin{array}{ll}
   e^{-e^{-a}} & \hbox{ if $|a|=O(1)$,} \\
    0 & \hbox{ if  $a\rightarrow -\infty$}, \\
    1 & \hbox{ if $a\rightarrow +\infty$.}
  \end{array}
\right.
\end{equation*}
\end{thm}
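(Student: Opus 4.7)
The plan is to reduce the question of connectivity to the presence of isolated vertices, and then to analyze the number of isolated vertices by the moment method. The structural heart of the argument is the observation that in this range of $p$ the event ``$G(n,p)$ is disconnected'' asymptotically coincides with ``$G(n,p)$ has an isolated vertex'': components of intermediate size $2\le k\le n/2$ are collectively unlikely.

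First, let $X$ be the number of isolated vertices in $G(n,p)$ and compute
\[
E[X]=n(1-p)^{n-1}.
\]
Substituting $p=(\log n+a)/n$ and using $(1-p)^{n-1}=(1+o(1))e^{-pn}$, we obtain $E[X]\to e^{-a}$ when $|a|=O(1)$, $E[X]\to 0$ when $a\to+\infty$, and $E[X]\to\infty$ when $a\to-\infty$. For the bounded regime I would then show that $X$ is asymptotically $\mathrm{Poisson}(e^{-a})$ by computing factorial moments: for each fixed $k\ge 1$,
\[
E\!\left[X(X-1)\cdots(X-k+1)\right]=\frac{n!}{(n-k)!}(1-p)^{k(n-1)-\binom{k}{2}}\longrightarrow e^{-ka},
\]
which equals the $k$-th factorial moment of $\mathrm{Poisson}(e^{-a})$. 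Hence, by the method of moments, $Pr[X=0]\to e^{-e^{-a}}$. For $a\to+\infty$, Markov's inequality gives $Pr[X\ge 1]\le E[X]\to 0$. For $a\to-\infty$, a direct computation of $E[X^2]=E[X]+n(n-1)(1-p)^{2n-3}$ yields $\mathrm{Var}(X)/E[X]^2\to 0$, so the second moment method forces $Pr[X=0]\to 0$.

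The main obstacle is the structural step, namely showing
\[
Pr[G(n,p)\text{ is disconnected and has no isolated vertex}]=o(1).
\]
I would bound the expected number of vertex subsets of size $k$ spanning a component, for $2\le k\le n/2$, by $\binom{n}{k}\,k^{k-2}\,p^{k-1}(1-p)^{k(n-k)}$, where Cayley's formula supplies a spanning tree of the chosen $k$-set and $(1-p)^{k(n-k)}$ enforces an empty cut to the remaining $n-k$ vertices. Splitting this sum at, say, $k_0=\lfloor\log^2 n\rfloor$, and using the standard estimates $\binom{n}{k}\le(en/k)^k$ together with $(1-p)^{k(n-k)}\le e^{-kp(n-k)}=n^{-k(1-k/n)}e^{-ak(1-k/n)}$, one verifies that the contribution of each range is $o(1)$: for small $k$ the factor $n^{-k(1-k/n)}$ overwhelms the polynomial powers of $\log n$, while for $k$ up to $n/2$ the binomial coefficient grows far too slowly to compensate for the cut penalty. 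Combining this structural bound with the moment computations gives $Pr[G(n,p)\text{ is connected}]=Pr[X=0]+o(1)$, and the three claimed limits drop out immediately.
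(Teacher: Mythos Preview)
Your argument is correct and is essentially the classical Erd\H{o}s--R\'enyi proof: Poisson convergence of the number of isolated vertices via factorial moments, second moment for $a\to-\infty$, and a first-moment sweep over connected $k$-sets with $2\le k\le n/2$ to rule out non-trivial small components. One small point worth making explicit is that the structural bound $\sum_k \binom{n}{k}k^{k-2}p^{k-1}(1-p)^{k(n-k)}=o(1)$ is only needed (and only holds) when $a$ is bounded below; in the regime $a\to-\infty$ you correctly bypass it by observing directly that $Pr[\text{connected}]\le Pr[X=0]\to 0$.

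As for comparison with the paper: there is nothing to compare. Theorem~\ref{ppt} is quoted from \cite{ER} as background and the paper gives no proof of it whatsoever; it is used only to justify restricting attention to $p\ge(\log n+\alpha(n))/n$ with $\alpha(n)\to\infty$ so that $G(n,p)$ is w.h.p.\ connected. Your write-up therefore supplies strictly more than the paper does on this point.
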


Since the concept of proper-path coloring only makes sense when the  graph is connected, we only study on the proper-path coloring of $G(n,p)$ which is w.h.p. connected. Our main result is as follows.
\begin{thm}\label{thm1}
For sufficiently large $n$, $pc(G(n,p))\le2$ if $p\ge\frac{\log n +\alpha(n)}{n}$, where $\alpha(n)\rightarrow \infty$.
\end{thm}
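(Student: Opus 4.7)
My plan is to split the argument into two regimes according to the size of $p$. In the dense regime $p = \omega(\sqrt{\log n/n})$, a standard first-moment calculation shows that $G(n,p)$ is $2$-connected with diameter $2$ w.h.p., so Theorem \ref{thd2} immediately yields $pc(G(n,p)) = 2$. This leaves the sparse regime $(\log n + \alpha(n))/n \le p = O(\sqrt{\log n/n})$, where the diameter can be as large as $\Theta(\log n/\log\log n)$ and degree-one vertices may appear, so neither Theorem \ref{thd2} nor Theorem \ref{th0} applies out of the box.

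For the sparse regime, I would first establish the following structural facts w.h.p.\ via first- and second-moment calculations: $G$ is connected (Theorem \ref{ppt}); the set $L$ of degree-one vertices satisfies $|L| = O(\log n)$; no two leaves share a common neighbor, so each $v \in L$ has a unique companion $f(v)$ of degree at least $2$; and the subgraph $H = G\setminus L$ contains a Hamilton cycle $C$. The last point can be argued either by noting that $H$ behaves like a random graph on $n - |L| \sim n$ vertices of essentially the same density and invoking classical Hamiltonicity theorems, or by a direct P\'osa-type rotation--extension argument applied to $H$, which has minimum degree at least $2$ by construction.

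With $C$ in hand, I would build the $2$-coloring by coloring the edges of $C$ alternately with colors $1$ and $2$, assigning arbitrary colors to the remaining non-pendant edges of $G$, and carefully choosing a color for each pendant edge $vf(v)$. Proper paths between two non-leaf vertices follow from $C$ alone, and for a leaf--non-leaf pair one walks out of the leaf into $f(v)$ and then along $C$, using that both colors appear on the two cycle-edges incident to $f(v)$. The main obstacle is pairs of leaves $v_1,v_2$ with companions $w_1 = f(v_1),\ w_2 = f(v_2)$: the concatenation $v_1\,w_1\,\cdots\,w_2\,v_2$ is proper iff a parity condition relating the cycle-distance between $w_1$ and $w_2$ to the colors of the two pendant edges holds. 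I expect to resolve this by choosing the direction of traversal around $C$ appropriately when $|V(H)|$ is odd, and, when $|V(H)|$ is even, by routing the path through a chord of $H$ (many such chords exist w.h.p.\ since the degrees in $H$ are of order $\log n$) to adjust the parity as needed.
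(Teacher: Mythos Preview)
Your high-level plan---delete the troublesome vertices, find a Hamilton cycle on the remainder, $2$-color it alternately, then reattach the deleted vertices---is exactly the paper's strategy. The gaps are in the two steps you treat most lightly.

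\textbf{Hamiltonicity of $H=G\setminus L$.} Neither of your two justifications holds up. First, $H$ is \emph{not} distributed like $G(n-|L|,p)$: you have conditioned on which vertices are leaves, and in any case $G(n',p)$ with $p=(\log n+\alpha(n))/n$ and $\alpha(n)=o(\log\log n)$ is itself \emph{below} the Hamiltonicity threshold $(\log n'+\log\log n'+\omega)/n'$, so the classical theorem would not apply even if the distribution were right. Second, a ``direct P\'osa-type argument'' needs expansion $|N(U)|\ge 2|U|$ for all small $U$, not merely $\delta(H)\ge 2$; near the connectivity threshold $H$ can contain many vertices of degree $2$ or $3$, and sets built from them need not expand. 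The paper avoids this by deleting not just leaves but the entire set $\mathrm{SMALL}=\{v:d(v)<\tfrac{\log n}{100}\}$, shows $|\mathrm{SMALL}|\le n^{0.1}$ and that small vertices are pairwise at distance $\ge 3$, and (possibly removing one more vertex so that $|V_1|$ is even) proves $G[V_1]$ is Hamiltonian via an edge-density lemma, the resulting $(\tfrac{n}{1500},2)$-expansion, and a P\'osa hole-counting/recoloring argument. Your proposal is missing this layer.

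\textbf{Leaf--leaf pairs.} The parity obstruction you flag is genuine, but ``route through a chord to adjust parity'' is not yet a proof. You fix one global coloring (you even say the non-pendant, non-cycle edges get \emph{arbitrary} colors), yet for a detour $w_1\to a\to b\to w_2$ (arc, chord, arc) to be proper you must control the colors at four junctions simultaneously, and the chord color is already committed. You have not argued that a suitable chord exists for \emph{every} bad pair under a single coloring. The paper takes a different route here: from each companion $x_i$ it grows, inside $E(G[V_1])\setminus E(C)$, a $\bigl(\tfrac{\log n}{101}\bigr)$-ary tree of depth $(\tfrac12+\epsilon)\tfrac{\log n}{\log\log n}$, colors each tree in alternating layers, and shows w.h.p.\ that for every $i\ne j$ there is an edge between the leaf sets $L_{x_i}$ and $L_{x_j}$; this yields a proper $x_i$--$x_j$ path whose end-colors are compatible with the pendant edges. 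Some mechanism of this strength is needed once you are forced (by the Hamiltonicity issue above) to delete all of $\mathrm{SMALL}$ rather than only the degree-one vertices.
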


We prove Theorem \ref{thm1} in Section 2. In Section 3, we give some results on the proper connection number of general graphs.

\section{Proof of Theorem \ref{thm1}}

In order to prove the first part of Theorem \ref{thm1}, we first present a classical result on random graphs as follows.
\begin{thm}\cite{B}\label{thH}
Let $\omega(n)\rightarrow \infty$, $p=\frac{1}{n}\{\log n+\log\log n+\omega(n)\}$. Then, w.h.p. $G(n,p)$ is Hamiltonian.
\end{thm}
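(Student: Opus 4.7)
The plan is to combine two standard ingredients: a sharp analysis of the minimum degree of $G=G(n,p)$ at the given density, together with P\'osa's rotation-extension technique driven by a vertex expansion property of the random graph. The threshold $p=(\log n+\log\log n+\omega(n))/n$ is precisely the threshold at which $G(n,p)$ acquires minimum degree at least $2$ w.h.p., and since any Hamiltonian graph satisfies $\delta\ge 2$, this bound is best possible; matching the two sides is what makes the result sharp.

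First I would establish that w.h.p.\ the random graph $G$ simultaneously satisfies: (a) $\delta(G)\ge 2$; and (b) a small-set expansion property of the form ``every $S\subset V(G)$ with $|S|\le n/\log^{2} n$ satisfies $|N_G(S)\setminus S|\ge 2|S|$''. Property (a) follows from a first-moment Poisson-type calculation: the expected number of vertices of degree at most $1$ in $G(n,p)$ is asymptotic to $ne^{-np}(1+np)$, which tends to $0$ precisely when $np-\log n-\log\log n\to\infty$. Property (b) is a standard union bound: for each candidate pair $(S,T)$ with $|T|<2|S|$ the $|S|(n-|S|-|T|)$ possible edges from $S$ to $V\setminus(S\cup T)$ must all be absent, and summing over $S,T$ of the relevant sizes gives a vanishing total probability at this density.

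Second, armed with (a) and (b), I would run P\'osa's rotation-extension argument. Let $P=v_0v_1\ldots v_k$ be a longest path in $G$. If the endpoint $v_k$ has no neighbor outside $P$, then for every neighbor $v_i$ of $v_k$ on $P$ the rotation across the edge $v_iv_k$ yields a longest path with new endpoint $v_{i+1}$. Iterating this produces a set $\mathrm{END}(P)$ of possible other endpoints sharing the initial vertex $v_0$; P\'osa's lemma bounds $|\mathrm{END}(P)|$ below by the expansion of a set of rotation ancestors, which by (b) is linear in $n$. Expansion of $\mathrm{END}(P)$ then forces either an edge leaving $V(P)$ (contradicting maximality unless $P$ is spanning), or, once $V(P)=V(G)$, an additional rotation at the other end combined with one more expansion edge closes the Hamiltonian cycle.

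The main obstacle is the sharpness of the threshold. At $p=(\log n+\log\log n+\omega(n))/n$ typical degrees are only $\Theta(\log n)$ and a non-negligible fraction of vertices have degree exactly $2$ or $3$, so naive expansion fails at such low-degree vertices. The standard remedy, following Bollob\'as, is to first delete pendants to pass to the $2$-core, prove expansion there, carry out the rotation phase inside the core in a way that never tries to leave through a degree-$2$ vertex, and finally verify that the at most $o(n)$ pendant attachments can be reinserted into the cycle; making every step survive the refined first-moment calculation at the sharp density is what forces the hypothesis $\omega(n)\to\infty$.
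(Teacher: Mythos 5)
You should first note that the paper does not prove this statement at all: Theorem \ref{thH} is quoted from Bollob\'as \cite{B} as a known classical result, so there is no in-paper proof to match. The closest internal analogue is the paper's proof of Claim \ref{claim1}, which implements a Hamiltonicity argument of exactly the kind you sketch, and comparing your sketch against it exposes two genuine gaps.

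First, your expansion property (b) cannot be obtained by the union bound you describe. For a fixed pair $(S,T)$ with $|S|=k$, $|T|=2k$, the probability that all $k(n-3k)$ potential edges from $S$ to $V\setminus(S\cup T)$ are absent is $(1-p)^{k(n-3k)}=e^{-(1+o(1))k\log n}=n^{-(1+o(1))k}$, while the number of such pairs is of order $\binom{n}{k}\binom{n}{2k}\approx n^{3k}$ (up to factors $k^{-O(k)}$); the union bound therefore contributes roughly $n^{2k}$ for each small $k$ and diverges. Already $k=1$ with $|T|\le 1$ is the event ``some vertex has degree at most $1$'', which needs the sharp Poisson computation, and for $k\ge 2$ the statement is only salvageable by separating out low-degree vertices and showing they are few and pairwise at distance at least $3$ --- precisely what the paper does in Lemma \ref{lem1}, after which expansion is proved only for subsets of the large-vertex set $V_1$ (Lemmas \ref{lem3}, \ref{lem4}). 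Your ``remedy'' paragraph also misdiagnoses the obstruction: since w.h.p.\ $\delta(G)\ge 2$ at this density, there are no pendant vertices and the $2$-core is all of $G$; the real difficulty is the presence of $\Theta(\log n\, e^{-\omega(n)})$ vertices of degree exactly $2$ or $3$ \emph{inside} the core, which must lie on any Hamiltonian cycle together with both (or two of) their edges, so rotations must be organized around them rather than around pendants.

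Second, the closing step of your P\'osa argument is incomplete. Rotation plus expansion yields, as in Lemma \ref{lem6}, at least $\tfrac{1}{2}\bigl(\tfrac{n}{1500}\bigr)^2$ \emph{holes} --- non-adjacent pairs whose addition as an edge would lengthen the longest path or close a Hamiltonian cycle. To finish, one must show that w.h.p.\ some hole is in fact an edge of $G$, and this cannot be asserted directly because the randomness of $G$ has already been exposed in constructing the longest path. The standard repairs are multi-round exposure (sprinkling), which is delicate when $\omega(n)\to\infty$ arbitrarily slowly, or the deletion/recoloring counting argument of Frieze and Krivelevich \cite{FK}, which is exactly what the paper deploys in Lemma \ref{lem7}: one bounds $|\mathscr{F}|/|G(n,p)|$ by double-counting red--blue colorings that must avoid all holes. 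Your sketch instead asserts that ``one more expansion edge closes the Hamiltonian cycle,'' conflating non-edges whose presence would help with edges actually present; as written, this step fails, and filling it requires one of the two mechanisms above.
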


Let $p'=\frac{1}{n}\{\log n+\log\log n+\omega(n)\}$, where $\omega(n)\rightarrow \infty$. Since Hamiltonian is a monotone property, combining with Theorem \ref{th0}, we know that $pc(G(n,p))=2$ if $p'\leq p<1$. Thus in the sequel, we assume that $p=\frac{\log n +\alpha(n)}{n}$, where $\alpha(n)=o(\log n)$, and $\alpha(n)\rightarrow \infty$.

For two disjoint vertex-subsets $X$ and $Y$ of $G$, let $e(X,Y)$ be the number of the edges with one endpoint in $X$ and the other in $Y$. For vertex-subsets $U\subset S$, $N(U,S)$ is the disjoint neighbor set of $U$ in $G[S]$, i.e., $N(U,S)=\{w\in S-U:\ \exists u\in S\ and \ \{uw\}\in G[S]\}$ and $d_S(v)=|N(v)\cap S|$ is the degree of $v$ in $S$. For ease of notation, let $G\in G(n,p)$ and denote by $V$ the vertex set of $G(n,p)$.

It is known that w.h.p. the diameter of $G(n,p)$ is asymptotically equal to $D=\frac{\log n}{\log \log n}$ \cite{B}. We call a vertex $u$ \emph{large} if its degree $d(u)\geq \frac{\log n}{100}$ and \emph{small} otherwise. Let SMALL denote the vertex-subset consisting of all the small vertices. We first give some properties of small vertices as follows.
\begin{lem}\label{lem1}
The following hold w.h.p. in $G(n,p)$.
\begin{itemize}
  \item [(1)]$|SMALL|\le n^{0.1}$.
  \item [(2)]No pair of small vertices are adjacent or share a common neighbor.
\end{itemize}
\end{lem}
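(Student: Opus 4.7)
The plan is a first-moment argument in both parts, driven by a sharp lower-tail Chernoff bound on the binomial degrees.

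The common ingredient: since $p = (\log n + \alpha(n))/n$ with $\alpha(n) = o(\log n)$, the degree of any fixed vertex $v$ is $\mathrm{Bin}(n-1,p)$ with mean $\mu = (n-1)p = (1+o(1))\log n$, and $v$ is small iff $d(v) < \log n / 100 = (1+o(1))\mu/100$. Using the sharp relative-entropy form of the lower-tail Chernoff bound,
\begin{equation*}
\Pr[\mathrm{Bin}(N,p) \le t] \le e^{-\mu}\left(\frac{e\mu}{t}\right)^t = \exp\!\bigl(-\mu\,H(t/\mu)\bigr), \qquad H(y) = 1 - y + y\log y,
\end{equation*}
and evaluating at $y \approx 1/100$ (where $H(1/100) > 0.94$) would give the key estimate
\begin{equation*}
q := \Pr[v \in SMALL] \le n^{-0.94 + o(1)}.
\end{equation*}

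For Part (1), this immediately yields $\mathbb{E}[|SMALL|] \le nq = n^{0.06 + o(1)}$, so by Markov's inequality $\Pr[|SMALL| > n^{0.1}] \le n^{-0.04 + o(1)} = o(1)$. For Part (2), I would bound each type of forbidden configuration by a first-moment count. For an adjacent pair of small vertices $u,v$: conditioning on $\{u,v\} \in E$, the restricted degrees $d_{V \setminus \{u,v\}}(u)$ and $d_{V \setminus \{u,v\}}(v)$ are independent $\mathrm{Bin}(n-2,p)$ variables, each below $\log n/100$ with probability $\le q(1+o(1))$. The expected number of such pairs is therefore at most $\binom{n}{2} p \cdot q^2 = O(n \log n \cdot n^{-1.88 + o(1)}) = o(1)$. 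For the common-neighbor case, a triple $(u,v,w)$ with $u,v$ small and $uw, vw \in E$ contributes $p^2 \cdot q^2(1+o(1))$; summing over $O(n^3)$ triples gives $O((\log n)^2 \, n^{-0.88 + o(1)}) = o(1)$. A final application of Markov finishes both.

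The main obstacle is choosing a Chernoff bound that is strong enough. The standard multiplicative bound $\Pr[X \le (1-\delta)\mu] \le e^{-\delta^2 \mu/2}$, applied with $\delta = 99/100$, would only produce $q \le n^{-0.49 + o(1)}$; this is not even strong enough to make $\mathbb{E}[|SMALL|] = o(n^{0.1})$, and for the common-neighbor configuration the expected count would blow up as $n^3 p^2 q^2 \sim n^{1.02}$. Switching to the sharper relative-entropy form, which captures the correct cost of a deep lower-tail deviation rather than a moderate one, is the decisive step; once it is in hand, everything reduces to bookkeeping.
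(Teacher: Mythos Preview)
Your proposal is correct and follows essentially the same approach as the paper: both parts are first-moment arguments driven by the sharp lower-tail estimate $\Pr[v \in SMALL] \le n^{-0.94+o(1)}$, and Part~(2) is handled identically by summing over adjacent pairs and common-neighbor triples. For Part~(1) you apply Markov directly to $\mathbb{E}[|SMALL|]$, whereas the paper union-bounds over all size-$\lceil n^{0.1}\rceil$ subsets of small vertices; your version is a mild simplification but not a genuinely different route.
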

\begin{proof}
(1) Let $s=\lceil n^{0.1}\rceil$. Let $\mathcal{A}$ denote the event that there exists a vertex-subset $S$ with order $s$ such that each vertex $v\in S$ is small. Then $\mathcal{A}$ happens with probability
\begin{align*}
Pr[\mathcal{A}]& \le\binom{n}{s}\left[\sum\limits_{k = 0}^{\frac{{\log n}}{{100}}} {\left( {\begin{array}{*{20}{c}}
n\\
k
\end{array}} \right)} {p^k}{(1 - p)^{n - 1 - k}}\right]^s\\
                & \le  {\left( {\frac{{ne}}{s}} \right)^s}{\left[ {\frac{{\log n}}{{100}}{{\left( {\frac{{100ne}}{{\log n}}} \right)}^{\frac{{\log n}}{{100}}}}{{\left( {\frac{{\log n + \alpha (n)}}{n}} \right)}^{\frac{{\log n}}{{100}}}}{e^{ - \frac{{\log n + \alpha (n)}}{n}\left( {n - 1 - \frac{{\log n}}{{100}}} \right)}}} \right]^s}  \\
                &\le  {\left( {\frac{{ne}}{s} \cdot \frac{{\log n}}{{100}}{{\left( {101e} \right)}^{\frac{{\log n}}{{100}}}}{e^{ - (\log n + \alpha (n) )+ \frac{{\log n + \alpha (n)}}{n} + \frac{{\log n}}{{100}} \cdot \frac{{\log n + \alpha (n)}}{n}}}} \right)^s}\\
                &\le{\left( {\frac{{ne}}{s} \cdot \frac{{\log n}}{{100}} \cdot {n^{\frac{6}{{100}}}} \cdot {n^{ - 1}} \cdot O(1)} \right)^s}\\
                &\le O(n^{-0.01\cdot s}).
\end{align*}
That implies that w.h.p. $|SMALL|\le n^{0.1}$.

(2) Let $\mathcal{B}$ denote the event that there exist two small vertices $x$, $y$ and the distance between $x$ and $y$ is at most 2.
We have
\begin{align*}
Pr[\mathcal{B}]&\le \binom{n}{2}\Bigg\{p\left(\sum\limits_{i = 1}^{\frac{{\log n}}{{100}} } {\left( {\begin{array}{*{20}{c}}
{n - 2}\\
i
\end{array}} \right)} {p^i}{\left( {1 - p} \right)^{n - 2 - i}}\right)^2\\
                &\null+\binom{n-2}{1}p^2\left(\sum\limits_{i = 1}^{\frac{{\log n}}{{100}} } {\left( {\begin{array}{*{20}{c}}
{n - 3}\\
i
\end{array}} \right)} {p^i}{\left( {1 - p} \right)^{n - 3 - i}}\right)^2\Bigg\}\\
                                 &\le  n^2\left[\frac{\log n +\alpha(n)}{n}+n\left(\frac{\log n +\alpha(n)}{n}\right)^2\right]
                                 \left[2\binom{n}{\frac{{\log n}}{{100}}}p^{\frac{{\log n}}{{100}}}(1-p)^{n-2-\frac{{\log n}}{{100}}}\right]^2\\
                                 &\le\left[n(2\log n)+n(2\log n)^2\right]\left[2\left(\frac{ne}
                                 {\frac{{\log n}}{{100}}}\right)^{\frac{{\log n}}{{100}}}p^{\frac{{\log n}}{{100}}}\left(1-p
                                 \right)^{n-2-\frac{{\log n}}{{100}}}\right]^2\\
                                 &\le\left[n(2\log n)+n(2\log n)^2\right]n^{-1.9}\\
                                 &\le n^{-0.8}.
\end{align*}
\end{proof}

From Lemma \ref{lem1}, we can obtain that every small vertex is adjacent to a large vertex and there is at most one small vertex among the neighbors of a large vertex. Thus, we can find a matching $M$ consisting of $|SMALL|$ edges in $G$ such that for every edge $e$ in $M$, one endpoint of $e$ is small and the other endpoint is large. Let $s=|M|=|SMALL|$. Denote the large vertices in $M$ by $x_1,x_2,\ldots,x_s$ and denote the small vertices in $M$ by $y_1,y_2,\ldots,y_s$. Without loss of generality, we assume that for every $i\in \{1,2,\cdots, s\}$, $\{x_iy_i\}$ is an edge in $M$. If $|V\backslash SMALL|$ is odd, then we take an arbitrary edge $\{uv\}$ disjoint from $M$ and let $M'=M\cup \{uv\}$. If $|V\backslash SMALL|$ is even, just let $M'=M$. Denote the cardinality of $M'$ by $s'$, that is, \begin{equation*}
s'=
\left\{
  \begin{array}{ll}
  s & \hbox{ if $|V\backslash SMALL|$ is even,} \\
    s+1 & \hbox{  if $|V\backslash SMALL|$ is odd.}
  \end{array}
\right.
\end{equation*}
Let
\begin{equation*}
V_1=
\left\{
  \begin{array}{ll}
 V\backslash SMALL & \hbox{ if $|V\backslash SMALL|$ is even,} \\
    V\backslash (SMALL\cup \{u\} )& \hbox{  if $|V\backslash SMALL|$ is odd.}
  \end{array}
\right.
\end{equation*}
So $|V_1|$ is even.

The following is an important structural property of $G$.

\begin{claim}\label{claim1}
The induced subgraph $G[V_1]$ of $G$ is w.h.p. Hamiltonian.
\end{claim}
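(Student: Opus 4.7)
My plan is to apply the P\'osa rotation-extension technique: I would first verify a strong neighbourhood-expansion condition for $G[V_1]$ and then conclude Hamiltonicity in the standard way. The starting point is that $G[V_1]$ has large minimum degree. Every $v\in V_1$ is large by construction, so $d_G(v)\ge \log n/100$; by Lemma~\ref{lem1}(2), at most one of $v$'s $G$-neighbours lies in $SMALL$, and removing the extra vertex $u$ (when $V_1=V\setminus(SMALL\cup\{u\})$) deletes at most one further neighbour, so $\delta(G[V_1])\ge \log n/100-2$ w.h.p.

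The bulk of the work is to show that w.h.p., for every $U\subseteq V_1$ with $1\le |U|\le |V_1|/3$, one has $|N_{G[V_1]}(U)\setminus U|\ge 2|U|+1$. I would split into two regimes. In the ``small'' regime $|U|\le n^{0.9}$, a crude union bound suffices: there are at most $\binom{n}{|U|}\binom{n}{2|U|}$ pairs $(U,T)$ with $|T|=2|U|$, and the probability that $N_G(U)\subseteq U\cup T$ is at most $(1-p)^{|U|(n-3|U|)}\le e^{-(1-o(1))|U|\log n}$; summing over $|U|\le n^{0.9}$ yields $o(1)$. In the ``large'' regime $n^{0.9}< |U|\le |V_1|/3$, I would apply a Chernoff bound to $e_G(U,V_1\setminus U)$, showing it exceeds $\tfrac{1}{10}|U|\log n$ w.h.p., and then use the w.h.p.\ upper bound $\Delta(G)=O(\log n)$ to convert this into the lower bound $|N_{G[V_1]}(U)\setminus U|\ge 2|U|+1$.

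Once the expansion condition holds, the classical P\'osa rotation-extension argument (as in Bollob\'as \cite{B}) produces a Hamilton path in $G[V_1]$, and a standard booster argument---each longest path admits $\Omega(n)$ candidate boosters by the expansion, and w.h.p.\ at least one such booster is actually an edge of $G$ since the remaining edge slots are independently sampled with probability $p$---closes it into a Hamilton cycle. The principal technical obstacle is the ``large'' regime of the expansion estimate: $p$ sits barely above the connectivity threshold, so the Chernoff tail has to be exploited sharply, and one must also be slightly careful because membership in $V_1$ depends on the random edges. The cleanest workaround is to prove the expansion statement directly in $G$ (for every $U\subseteq V$ with $|U|\le n/3$, $|N_G(U)\setminus U|\ge 2|U|+1$) and only afterwards pass to $V_1$; since $|V\setminus V_1|\le n^{0.1}+1$, at most this many vertices are lost from any neighbourhood, which is negligible compared to $2|U|+1$.
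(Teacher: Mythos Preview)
Your overall strategy---establish expansion in $G[V_1]$ and close via P\'osa rotation---matches the paper's, but two steps do not go through as written.

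First, your ``workaround'' of proving $|N_G(U)\setminus U|\ge 2|U|+1$ for all $U\subseteq V$ and only afterwards restricting to $V_1$ is false at this value of $p$: w.h.p.\ $G(n,p)$ has vertices of degree $1$, so singleton $U$ already violate the inequality. Your own union bound reflects this---for bounded $|U|$ the combinatorial factor $\binom{n}{|U|}\binom{n}{2|U|}\approx n^{3|U|}$ swamps the probability bound $e^{-(1-o(1))|U|\log n}\approx n^{-|U|}$. (In the large regime, the Chernoff-plus-$\Delta$ route also falls short quantitatively: with $e(U,V\setminus U)\ge \tfrac1{10}|U|\log n$ and $\Delta\le 3np\approx 3\log n$ you only get $|N(U)\setminus U|\ge |U|/30$, not $2|U|$.) The paper sidesteps this by proving expansion only inside $V_1$, only up to $|U|\le n/1500$, via the edge-density bound of Lemma~\ref{lem2}(1) combined with the minimum degree $\log n/101$ available in $V_1$; the dependency of $V_1$ on the random edges is handled by conditioning on the deterministic high-probability event $\mathscr{H}$.

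Second, and more fundamentally, your closing step is circular. The booster pairs are determined by the longest-path structure of $G[V_1]$, which depends on \emph{all} of $G$; there are no ``remaining edge slots \ldots\ independently sampled with probability $p$'' left to query. This is exactly the obstacle that the paper's red--blue colouring and counting argument (Lemma~\ref{lem7}, after Cooper--Frieze and Frieze--Krivelevich) is built to overcome: one sets aside a deletable matching $R$, shows that the blue graph $G_B[V_1]$ is already a connected $(n/1500,2)$-expander whose $\Omega(n^2)$ holes depend only on the blue edges, and then double-counts colourings to conclude that non-Hamiltonian $G$ are a $o(1)$ fraction. A two-round exposure (sprinkling) argument would also work, but you have not set one up; as stated, the booster step is a genuine gap.
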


Note that to prove $pc(G) \le 2$, it suffices to give $G$ an edge-coloring with 2 colors and verify that the edge-coloring is a proper-path coloring of $G$. Denote the Hamiltonian cycle of $G[V_1]$ by $C$. We color the edges of $C$ consecutively and alternately with color 1 and 2, and color all the edges in $M'$ with color 1. It is easy to get that under this partial coloring, every pair of large vertices have a proper path connecting them, and there exists a proper path connecting a  vertex in $\{y_1,y_2,\ldots,y_s,u\}$ (if such $u$ exists) with a vertex in $V_1$. The following claim helps us to take care of pairs of vertices in $\{y_1,y_2,\ldots,y_s,u\}$ .

\begin{claim}\label{claim2}
There exists an edge-coloring of edges in $E(G)\backslash (E(C)\cup M')$ with 2 colors such that w.h.p. every pair of vertices in $\{y_1,y_2,\ldots,y_s,u\}$  have a proper path connecting them in $G$.
\end{claim}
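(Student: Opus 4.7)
The plan is to give a deterministic 2-colouring of the free edges and then exhibit, for each pair of vertices in $\{y_1,\dots,y_s,u\}$, a proper path through the Hamiltonian cycle $C$ together with one or two well-chosen chord edges. Colour every edge of $E(G)\setminus(E(C)\cup M')$ with colour $2$; under this rule the cycle edges carry the alternating pattern $1,2,1,2,\dots$, the matching edges of $M'$ carry colour $1$, and all remaining (``chord'') edges carry colour $2$.

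Fix a pair $\{y_i,y_j\}$; the cases involving $u$ are analogous. A candidate path $y_i\,x_i\,v\,(\text{cycle arc})\,x_j\,y_j$ in which $v$ is a chord-neighbour of $x_i$ is proper exactly when the cycle arc from $v$ to $x_j$ begins with a colour-$1$ edge (to alternate with the chord $x_iv$) and ends with a colour-$2$ edge (to alternate with $x_jy_j$). Because $C$ is an even cycle with a strictly alternating $2$-colouring, this parity condition selects a unique cycle direction at $v$ and forces $v$ to lie in a specific ``parity-good'' subset $V_{\mathrm{good}}(x_j)\subset V_1$ of size $|V_1|/2$.

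The proof thus reduces to the structural claim: for every relevant pair $(x_i,x_j)$ such a $v$ exists in $N(x_i)$. A direct single-chord tail bound only yields failure probability $(1/2)^{\log n/100}=n^{-\Theta(1)}$, too weak for the union bound over the $O(n^{0.2})$ relevant pairs. I would therefore extend the candidate path to use two chords separated by a cycle step, namely $y_i\,x_i\,v_1\,v_2\,v_3\,(\text{cycle arc})\,x_j\,y_j$, with $v_1v_2\in E(C)$ of colour $1$ and $v_2v_3$ a chord. As $v_1$ ranges over the $\ge\log n/100$ chord-neighbours of $x_i$, its colour-$1$ cycle neighbour $v_2$ ranges over $\Omega(\log n)$ distinct large vertices, and each such $v_2$ has $\ge\log n/100$ chord-neighbours that can independently land in $V_{\mathrm{good}}(x_j)$. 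An iterated Chernoff estimate over the $\Omega(\log n)$ essentially independent $v_2$'s pushes the failure probability for a single pair down to $n^{-\Theta(\log n)}$, easily absorbing the union bound.

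The main obstacle will be the dependency between the Hamiltonian cycle $C$ and the chord neighbourhoods: $C$ is constructed from $G(n,p)$, so the identity of $V_{\mathrm{good}}(x_j)$ is not independent of the random edges at $v_2$. I would handle this with a standard two-round exposure of $G(n,p)$ (splitting $p$ as $(1-p_1)(1-p_2)=1-p$), arranging round $1$ to deliver Hamiltonicity of $G[V_1]$, the small-vertex control of Lemma~\ref{lem1}, and large-vertex degree lower bounds, while reserving round $2$ as a fresh source of chord-neighbours for each $v_2$; conditional on round $1$, the round-$2$ edges are independent of $C$ and of $V_{\mathrm{good}}(x_j)$, and the Chernoff step becomes routine.
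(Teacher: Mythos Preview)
Your approach is genuinely different from the paper's. The paper never routes the $y_i$--$y_j$ paths through the Hamiltonian cycle at all: it builds vertex-disjoint $(\log n/101)$-ary trees $T_{x_1},\dots,T_{x_{s+1}}$ of depth $(\tfrac12+\epsilon)D$ inside the chord graph $H=(V_1,E(G[V_1])\setminus E(C))$, colours tree edges by depth parity, and then shows that any two leaf sets $L_{x_i},L_{x_j}$ (each of size $n^{1/2+\epsilon}$) are joined by an edge w.h.p. The resulting $y_i$--$y_j$ path goes down one tree, across one leaf--leaf edge, and up the other tree; disjointness of the trees makes simplicity automatic. Your idea of using the cycle itself plus one or two chords is more economical when it works, but it forces you to control the global geometry of the cycle, which the tree construction sidesteps entirely.

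There is a concrete gap that you do not address: the cycle arc from $v$ (or $v_3$) to $x_j$ must avoid $x_i$ (and $v_1,v_2$), or the walk is not a path. Your parity set $V_{\mathrm{good}}(x_j)$ ignores this. Concretely, label $C=w_0w_1\cdots w_{2m-1}$ with $w_{2k}w_{2k+1}$ coloured $1$ and take $x_j=w_0$; then ``start colour $1$, end colour $2$'' forces $v_3=w_{2\ell}$ and the arc $w_{2\ell},w_{2\ell+1},\dots,w_{2m-1},w_0$. If $x_i=w_{2m-2}$ this arc passes through $x_i$ for \emph{every} admissible $v_3$, so your $V_{\mathrm{good}}$ is effectively empty and no amount of branching at $v_2$ helps. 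One can try to rescue this by also allowing the swapped path (chord out of $x_j$, arc into $x_i$) and by falling back on the pure cycle path when the two parities differ, but that case analysis is missing, and even after swapping the ``good'' set can shrink to a constant fraction of $n$, which interacts badly with your probability estimates.

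On the sprinkling fix: bear in mind that in the paper's setup $\mathrm{SMALL}$, $V_1$, $M'$ and $C$ are all defined from the full graph $G$, not from a first round. If you want round-2 edges to be independent of $C$ and $V_{\mathrm{good}}(x_j)$ you must rebuild the entire Section~2 scaffolding (Lemma~\ref{lem1}, Claim~\ref{claim1}) on $G(n,p_1)$ alone and then argue that the resulting $y_i$'s are still the objects you need to connect in $G$. With $p_2=c/n$ this is feasible, but then each $v_2$ has only $\Theta(1)$ fresh neighbours in expectation, so the per-pair failure probability is $n^{-\Theta(c)}$ rather than the $n^{-\Theta(\log n)}$ you claim; you must choose $c$ large enough to beat the $n^{0.2}$ union bound, and check that $p_1$ still clears the thresholds used in Lemma~\ref{lem1} and Claim~\ref{claim1}.
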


Thus Theorem \ref{thm1} follows from the above arguments.
So all we need to do is to prove Claims \ref{claim1} and \ref{claim2}.

\subsection{Proof of Claim \ref{claim1}}

We will use the similar arguments of Cooper \cite{CF} and Frieze \cite{FK}. The following lemma establishes some structural properties of $G$, which we will make use of in our proof.

\begin{lem}\label{lem2}
The following hold in $G$ w.h.p. :
\begin{itemize}
  \item [(1)]For any $S \subseteq V$, $|S|\le \frac{n}{375}$ implies $|E(G[S])|< \frac{|S|np}{250}$.
  \item [(2)]If $U, W \subseteq V$, $U\cap W= \emptyset$, $|U|, |W| \ge \frac{n}{\log\log n}$, then $e(U, W)>0$.
  \item [(3)]There are at most $n^{0.2}$ edges incident with vertices in SMALL.
\end{itemize}
\end{lem}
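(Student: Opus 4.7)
The plan is to verify each of the three parts by a standard first-moment / union-bound argument, since each is a typical local-sparseness statement about $G\in G(n,p)$ with $p=(\log n+\alpha(n))/n$. For (1), I would range over $2\le k\le n/375$, set $m_k:=\lceil knp/250\rceil$, and bound the expected number of $k$-subsets $S$ with $|E(G[S])|\ge m_k$ by
$$\binom{n}{k}\binom{\binom{k}{2}}{m_k}p^{m_k}\le\left(\frac{ne}{k}\right)^{k}\left(\frac{125\,ek}{n}\right)^{knp/250},$$
using $\binom{n}{k}\le(ne/k)^{k}$ and $\binom{N}{m}p^m\le(eNp/m)^m$ with $N=\binom{k}{2}$. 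For $k\le n/375$ the base $125ek/n$ is at most $e/3<1$, so since $np\ge\log n$ the second factor decays like $e^{-\Omega(k\log n)}$, which for sufficiently large $n$ dominates the first factor $e^{O(k)}$. Summing the resulting geometric-type bound over $k$ yields $o(1)$, and a union bound closes (1).

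For (2), every fixed pair of disjoint sets $U,W$ with $|U|,|W|\ge n/\log\log n$ satisfies
$$Pr[e(U,W)=0]=(1-p)^{|U|\,|W|}\le \exp\!\left(-\frac{n(\log n+\alpha(n))}{(\log\log n)^2}\right).$$
Since each vertex lies in $U$, in $W$, or in neither, there are at most $3^n$ ordered pairs to consider, and the union bound closes because $n\log 3\ll n\log n/(\log\log n)^2$. For (3), I would simply invoke Lemma~\ref{lem1}: w.h.p.\ $|SMALL|\le n^{0.1}$, every small vertex has degree at most $(\log n)/100$, and by Lemma~\ref{lem1}(2) no two small vertices are adjacent, so the number of edges with an endpoint in $SMALL$ equals $\sum_{v\in SMALL}d(v)\le n^{0.1}\cdot(\log n)/100<n^{0.2}$ for $n$ sufficiently large.

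The main obstacle is part (1): parts (2) and (3) reduce to a single edge-independence estimate and to Lemma~\ref{lem1} respectively, but for (1) I must verify that the union bound closes uniformly in $k$ throughout the whole range $2\le k\le n/375$. The tightest point of the estimate is near $k\asymp n/375$, where the polynomial factor $(ne/k)^k$ and the exponential suppression $(125ek/n)^{knp/250}$ are closest in magnitude; the constants $375$ and $250$ in the statement seem chosen precisely so that these two factors balance with a small positive margin once $n$ is large enough.
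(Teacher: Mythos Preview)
Your plan is correct and essentially the paper's own: for (1) the paper does the same union bound over $|S|=s$, phrasing the tail as $\Pr[\mathrm{Bin}(\binom{s}{2},p)\ge\gamma\binom{s}{2}p]\le(e/\gamma)^{\gamma\binom{s}{2}p}$ with $\gamma=n/(125s)$, which is equivalent to your $\binom{N}{m}p^m\le(eNp/m)^m$ estimate; for (2) it sums $\binom{n}{s}\binom{n-s}{t}(1-p)^{st}$ over $s,t\ge n/\log\log n$ instead of your cruder $3^n$ count, and (3) is verbatim your argument from Lemma~\ref{lem1}. One caution when you write (1) out: the first factor $(ne/k)^k$ is \emph{not} $e^{O(k)}$ for small $k$ (it is $e^{k\log(ne/k)}$), so rather than comparing the two factors separately, group them as $\bigl[\tfrac{ne}{k}\,(\tfrac{125ek}{n})^{np/250}\bigr]^{k}$ and check that the bracket is below $1$ uniformly for $2\le k\le n/375$ once $n$ is large---as you already observed, the tightest point is $k\approx n/375$, where $ne/k\approx 375e$ and the base $125ek/n\approx e/3$.
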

\begin{proof}
(1) The number of edges in an induced subgraph $G[S]$ with $|S|=s$ is a binomial random variable with parameters $\binom{s}{2}$ and $p$. By Bollob\'{a}s \cite{B} we have for large deviations of binomial random variables
$$Pr[the \ number\  of\ edges\ in\ G[S]\ge\gamma \binom{s}{2}p ]<\left(\frac{e}{\gamma}\right)^{\gamma \binom{s}{2}p}.$$ Setting $\gamma=\frac{n}{125s},$ we obtain that
\[\sum\limits_{s = 1}^{\frac{n}{{375}}} {\left( {\begin{array}{*{20}{c}}
n\\
s
\end{array}} \right)}\left(\frac{e}{\gamma}\right)^{\gamma \binom{s}{2}p}=o(1). \]

(2) Let $\mathcal {A}$ denote the event that there exist two subsets $U, W \subseteq V$, $U\cap W= \emptyset$, $|U|, |W| \ge \frac{n}{\log\log n}$ and $e(U, W)=0$. Then
\begin{align*}
Pr[\mathcal {A}]&\le \sum_{s \ge \frac{n}{\log \log n}}\sum_{t \ge \frac{n}{\log \log n}}\binom{n}{s}\binom{n-s}{t}(1-p)^{st}\\
&\le  \sum_{s \ge \frac{n}{\log \log n}}\sum_{t \ge \frac{n}{\log \log n}}\binom{n}{s}\binom{n-s}{t}e^{-\frac{\log n+\alpha(n)}{n}st}\\
&\le \sum_{s \ge \frac{n}{\log \log n}}\sum_{t \ge \frac{n}{\log \log n}}\binom{n}{s}\binom{n-s}{t}e^{-\frac{\log n}{n}\cdot \frac{n}{\log \log n}\cdot \frac{n}{\log \log n}}\\
&\le o(n^{-1}).
\end{align*}

(3) Since SMALL is an independent set, i.e., no edges in the induced subgraph $G[SAMLL]$, we have that the number of edges incident to SMALL is no more than
$$|SMALL| \cdot \frac{\log n}{100} \le n^{0.1} \cdot \frac{\log n}{100}< n^{0.2}. $$
\end{proof}

Let $\mathscr{H}=\{G \in G(n,p)$: the conditions of Lemmas \ref{lem1} and \ref{lem2} hold$\}$. The following lemma is an immediate consequence of Lemma \ref{lem2}(1).

\begin{lem}\label{lem3}
Let $G \in \mathscr{H}$, $U \subseteq S \subset V$, $|U| \le \frac{n}{1500}$, $F \subset E(G[S])$ and $H=(S,F)$. If $U$ is such that the degree of $w$ in $H$ is at least $\frac{\log n}{101}$ for all $w \in U$, then $|N(U, S)| \ge 3|U|$ in $H$.
\end{lem}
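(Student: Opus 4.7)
My plan is to argue by contradiction, reducing to Lemma~\ref{lem2}(1). Suppose, for contradiction, that (in $H$) $|N(U, S)| < 3|U|$, and set $T = U \cup N(U, S) \subseteq S$. Because $|U| \le n/1500$, we have $|T| < 4|U| \le n/375$, so $T$ falls within the range of Lemma~\ref{lem2}(1); applying it yields
\[
|E(G[T])| \;<\; \frac{|T|\,np}{250} \;<\; \frac{4|U|\,np}{250}.
\]

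For the matching lower bound I would double-count edges of $H$ incident to $U$. Every such edge has its second endpoint either in $U$ or in $N(U, S)$, so lies in $H[T] \subseteq G[T]$. Combined with the minimum-degree hypothesis on $U$, this gives
\[
|E(G[T])| \;\ge\; \frac{1}{2}\sum_{w \in U} d_H(w) \;\ge\; \frac{|U|\log n}{202}.
\]
Recalling that $np = \log n + \alpha(n) = (1+o(1))\log n$, the two bounds on $|E(G[T])|$ now have to be compared. The numerical constants $1500$, $101$, $250$, $375$ in the hypotheses and in Lemma~\ref{lem2}(1) are chosen so that, for all sufficiently large $n$, the lower bound strictly exceeds the upper one; this furnishes the desired contradiction.

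Conceptually the argument is just one step: Lemma~\ref{lem2}(1) applied to the set $T$ together with elementary double-counting of edges incident to $U$. No use is made of Lemma~\ref{lem1} or of parts~(2) and~(3) of Lemma~\ref{lem2}. The main (and essentially only) potential obstacle is arithmetic bookkeeping, namely verifying that the specific numerical constants combine with the correct sign to produce the contradiction; this is the reason for the particular thresholds $|U|\le n/1500$ and $d_H(w)\ge \log n/101$ in the statement.
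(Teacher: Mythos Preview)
Your approach---set $T=U\cup N_H(U,S)$, apply Lemma~\ref{lem2}(1) to $T$, and double-count the edges of $H$ incident with $U$---is exactly what the paper has in mind when it calls the lemma ``an immediate consequence of Lemma~\ref{lem2}(1)''; the paper gives no further details.

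There is, however, a genuine gap in your write-up: the sentence ``the numerical constants \ldots\ are chosen so that \ldots\ the lower bound strictly exceeds the upper one'' is simply false with the constants as stated. With $np=(1+o(1))\log n$ your upper bound is
\[
\frac{4|U|\,np}{250}\;=\;(1+o(1))\,\frac{|U|\log n}{62.5},
\]
whereas your lower bound is only $|U|\log n/202$, so no contradiction results. Even if you sharpen the lower count to $\sum_{w\in U}d_H(w)-|E(H[U])|\ge |U|\log n/101-|U|np/250$, you still need $1/101>5/250$, which fails. This is not a defect of your method but an inconsistency in the paper's own choice of constants: for the argument to close one needs (roughly) the density constant in Lemma~\ref{lem2}(1) to exceed eight times the degree constant $101$, and $250$ does not. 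The proof of Lemma~\ref{lem2}(1) would just as easily yield $|E(G[S])|<|S|np/1000$ for $|S|\le n/1500$ (take $\gamma=n/(500s)$ there), and with that the contradiction goes through verbatim. You should point this out explicitly rather than assert that the arithmetic works.
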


We regard the edges in $G$ as initially colored blue, but with the option of recoloring a set $R$ of the edges red. We require the set $R$ of red edges is ``deletable", which is defined as follows.

\begin{df}
\begin{itemize}
  \item [(1)]$R \subseteq E(G)$ is deletable if
  \begin{itemize}
  \item [(i)]$R$ is a matching.
  \item [(ii)]No edge of $R$ is incident with a small vertex.
  \item [(iii)]$|R|=\lceil n^{0.1}\rceil$.
  \end{itemize}
  \item [(2)]Let $G_B[V_1]$ denote the subgraph of $G[V_1]$ induced by blue edges.
  \item [(3)]$N_B(U, V_1)$ denotes the disjoint neighbor set of $U$ in $G_B[V_1]$.
\end{itemize}
\end{df}

\begin{lem}\label{lem4}
Let $G \in \mathscr{H}$ and let $U \subseteq V_1$, $|U| \le \frac{n}{1500}$. Then $|N_B(U, V_1)| \ge 2|U|$.
\end{lem}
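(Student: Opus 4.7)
The plan is to invoke Lemma \ref{lem3} directly, taking $S = V_1$, $F = E(G_B[V_1])$ and $H = G_B[V_1]$; its conclusion $|N(U,S)| \ge 3|U|$ then yields the required $|N_B(U,V_1)| \ge 2|U|$ with room to spare. The size assumption $|U| \le n/1500$ is already part of the statement of Lemma \ref{lem4}, so the only hypothesis of Lemma \ref{lem3} left to verify is the degree condition: every $w \in U$ should satisfy $d_{G_B[V_1]}(w) \ge \log n / 101$.

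To verify this I would start from the bound $d_G(w) \ge \log n/100$, which holds because $w \in V_1 \subseteq V \setminus \mathrm{SMALL}$ is a large vertex by the very definition of $V_1$, and then carefully account for the edges at $w$ that are lost in passing from $G$ to $G_B[V_1]$. There are exactly three sources of loss: (i) edges from $w$ to small vertices, of which there is at most one, since by Lemma \ref{lem1}(2) two distinct small vertices cannot share a common neighbor; (ii) in the case $|V\setminus\mathrm{SMALL}|$ is odd, the possible edge from $w$ to the excluded vertex $u \notin V_1$, contributing at most $1$; (iii) red edges of the deletable set $R$ incident to $w$, again at most $1$ because $R$ is required to be a matching. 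Summing these contributions yields $d_{G_B[V_1]}(w) \ge \log n/100 - 3$, which exceeds $\log n/101$ as soon as $\log n/10100 \ge 3$, i.e.\ for all sufficiently large $n$, and Lemma \ref{lem3} then applies to give $|N_B(U,V_1)| \ge 3|U| \ge 2|U|$.

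I do not expect any serious obstacle: the proof reduces cleanly to an application of Lemma \ref{lem3}, and the only mildly delicate point is the three-way bookkeeping of removed edges, namely remembering to treat the missing vertex $u$ separately from the red matching and the edges into $\mathrm{SMALL}$. The slack between $\log n/100$ and $\log n/101$ is more than enough to absorb the constant loss, so the argument is robust.
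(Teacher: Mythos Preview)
Your proof is correct and follows essentially the same route as the paper: both arguments invoke Lemma~\ref{lem3} after checking that every $w\in U$ has large enough degree, accounting for the at most one small neighbor, the possible edge to $u$, and the at most one incident red edge. The only cosmetic difference is that the paper applies Lemma~\ref{lem3} to $G[V_1]$ (subtracting only $2$ from the degree) to obtain $|N(U,V_1)|\ge 3|U|$ and then removes the at most $|U|$ red edges from the neighborhood afterward, whereas you fold the red-edge loss into the degree bound up front and apply Lemma~\ref{lem3} directly to $G_B[V_1]$, which even yields the slightly stronger $|N_B(U,V_1)|\ge 3|U|$.
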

\begin{proof}
By Lemma \ref{lem2}(1), each vertex $w \in U$ has at most one neighbor in SMALL. We have $d_{V_1}(w) \ge \frac{\log n}{100}-1-1\ge \frac{\log n}{101}$. From Lemma \ref{lem3}, we obtain that there are at least $3|U|$ neighbors of $U$ in $V_1$. Thus the removal of $\min\{|R|, |U|\}$ deletable edges makes $|N_B(U, V_1)| \ge 2|U|$.
\end{proof}

\begin{lem}\label{lem5}
For $G \in \mathscr{H}$, $G[V_1]$ is connected.
\end{lem}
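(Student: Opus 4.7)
The plan is a contradiction argument combining the small-set expansion supplied by Lemma \ref{lem3} with the large-set edge existence supplied by Lemma \ref{lem2}(2). Suppose for contradiction that $G[V_1]$ is disconnected, and let $A$ be a connected component of smallest order, so that $|A|\le |V_1|/2$. I first note that every $w\in V_1$ is large, hence $d_G(w)\ge \log n/100$; by Lemma \ref{lem1}(2) at most one neighbour of $w$ lies in $SMALL$, and at most one further neighbour is lost to the discarded vertex (the extra vertex removed when $|V\setminus SMALL|$ is odd). Consequently $d_{V_1}(w)\ge \log n/100-2\ge \log n/101$ for $n$ large enough, which is exactly the degree hypothesis required to apply Lemma \ref{lem3} with $S=V_1$ and $H=(V_1,E(G[V_1]))$.

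The argument then splits on the size of $A$. If $|A|\le n/1500$, apply Lemma \ref{lem3} with $U=A$ to obtain $|N(A,V_1)|\ge 3|A|\ge 3$. However, $A$ is a connected component of $G[V_1]$, so no $G[V_1]$-edge leaves $A$ and $N(A,V_1)=\emptyset$, a contradiction. If $|A|>n/1500$, then because $A$ is the smallest component $|V_1\setminus A|\ge |A|>n/1500$; for $n$ sufficiently large both $|A|$ and $|V_1\setminus A|$ exceed $n/\log\log n$, and Lemma \ref{lem2}(2) applied to the disjoint pair $(A,\,V_1\setminus A)$ yields at least one $G$-edge between them. Since $A\cup(V_1\setminus A)\subseteq V_1$, such an edge actually lies in $G[V_1]$, again contradicting the assumption that $A$ is a component of $G[V_1]$.

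The one place requiring care is dovetailing the two quantitative thresholds: Lemma \ref{lem3} gives vertex expansion only for $|U|\le n/1500$, while Lemma \ref{lem2}(2) requires each side of the cut to have size at least $n/\log\log n$, and these ranges overlap only when $n$ is large enough that $\log\log n\ge 1500$. Because Theorem \ref{thm1} is stated for sufficiently large $n$, this mismatch is harmless, and no further obstacle remains; the proof is a clean dichotomy between the small-component regime handled by Lemma \ref{lem3} and the large-component regime handled by Lemma \ref{lem2}(2).
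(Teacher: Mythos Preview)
Your argument is correct and follows essentially the same two-regime dichotomy as the paper: small components are ruled out by expansion and large components by Lemma~\ref{lem2}(2). The only cosmetic difference is that the paper invokes Lemma~\ref{lem4} (the blue-graph expansion $|N_B(U,V_1)|\ge 2|U|$) in place of your direct appeal to Lemma~\ref{lem3}, but since $G_B[V_1]\subseteq G[V_1]$ both routes yield the same contradiction and your version is slightly more to the point here.
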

\begin{proof}
If $G[V_1]$ is not connected, then by Lemma \ref{lem4} the smallest component cannot consist of less than $\frac{n}{1500}$ vertices.

On the other hand, by Lemma \ref{lem2}(2), any two sets of vertices of size at least $\frac{n}{\log\log n}$ must be connected by an edge. So $G[V_1]$ is connected.
\end{proof}

To prove Claim \ref{claim1}, we also need some more definitions and results taken from P\'{o}sa \cite{P} and Frieze \cite{FK}.

\begin{df}
Let $\Gamma=(V,E)$ be a non-Hamiltonian graph with a longest path of length $\ell$. A pair $\{u,v\}\notin E$ is called a hole if add $\{u,v\}$ to $\Gamma$ creates a graph $\Gamma'$ which is Hamiltonian or contains a path longer than $\ell$.
\end{df}

\begin{df}
A graph $\Gamma=(V,E)$ is called a $(k,c)$-expander if $|N(U)|\ge c|U|$ for every subset $U \subseteq V(G)$ of cardinality $|U|\le k$.
\end{df}

\begin{lem}\cite{FK}\label{lem6}
Let $\Gamma$ be a non-Hamiltonian connected $(k,2)$-expander. Then $\Gamma$ has at least $\frac{k^2}{2}$ holes.
\end{lem}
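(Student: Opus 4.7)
The plan is to use P\'osa's classical rotation-extension technique. First, I would fix a longest path $P = v_0 v_1 \cdots v_\ell$ in $\Gamma$; by maximality of $P$, every neighbor of $v_0$ (and of $v_\ell$) must lie on $P$. Next, I would perform rotations keeping $v_\ell$ fixed: whenever the current longest path $Q = u_0 u_1 \cdots u_\ell$ contains an edge $\{u_0, u_i\}$, I replace $Q$ by the new longest path $u_{i-1} u_{i-2} \cdots u_0 u_i u_{i+1} \cdots u_\ell$ with free endpoint $u_{i-1}$. Applying this exhaustively yields a set $T_0 \subseteq V(\Gamma)$ consisting of all vertices that appear as the free endpoint of some longest path produced in this way.

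The key step --- and what I expect to be the main obstacle --- is P\'osa's classical lemma, which asserts that the $(k,2)$-expander hypothesis forces $|T_0| \ge k + 1$. The idea is that if $|T_0| \le k$, then the expander condition gives $|N(T_0)| \ge 2|T_0|$, while a combinatorial analysis of the rotation structure shows that every vertex of $N(T_0) \setminus T_0$ must occupy one of at most $2|T_0|$ ``booked'' positions adjacent to rotation endpoints on the longest paths, yielding a contradiction after careful parity bookkeeping. Matching the expansion factor $2$ exactly to the rotation-structure bound is the delicate technical point. Once this is in hand, I would repeat the procedure from each $u \in T_0$: fix a longest path $P_u$ from $u$ to $v_\ell$, fix the endpoint $u$, and rotate from the $v_\ell$-side to obtain a set $T_1(u)$ with $|T_1(u)| \ge k + 1$ by the same lemma.

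Finally, I would verify that each pair $\{u, w\}$ with $u \in T_0$ and $w \in T_1(u) \setminus \{u\}$ is a hole. There is a longest $u$-$w$ path in $\Gamma$; were $\{u, w\} \in E(\Gamma)$, this path together with $\{u, w\}$ would form a cycle $C$ of length $\ell + 1$, and since $\Gamma$ is non-Hamiltonian, some vertex lies outside $C$. Connectedness of $\Gamma$ then yields an edge from a vertex outside $C$ into $C$, which combined with all but one edge of $C$ produces a path of length $\ell + 1$, contradicting the maximality of $P$. Hence $\{u, w\} \notin E(\Gamma)$, and adding this edge creates a longer path or a Hamiltonian cycle, making $\{u, w\}$ a hole by definition. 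Counting ordered pairs $(u, w)$ gives at least $(k+1) \cdot k$, and dividing by $2$ to pass to unordered pairs yields at least $k(k+1)/2 \ge k^2/2$ distinct holes, as required.
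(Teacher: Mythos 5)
The paper does not prove this lemma at all --- it is quoted directly from Frieze and Krivelevich \cite{FK} --- and your proposal correctly reconstructs essentially the argument given there: P\'osa rotation--extension with one endpoint fixed, the endpoint-set bound played against the $(k,2)$-expansion to force $|T_0|\ge k+1$ and $|T_1(u)|\ge k+1$, the connectedness/non-Hamiltonicity argument showing each pair $\{u,w\}$ is a non-edge whose addition yields a Hamilton cycle or a longer path, and the division by two in the count. The one point you leave implicit (and rightly flag as the delicate step) is that P\'osa's lemma gives the strict bound $|N(T_0)|\le 2|T_0|-1$ --- for instance because the original endpoint $v_0\in T_0$ has no predecessor on $P$ --- since the weaker bound $|N(T_0)|\le 2|T_0|$ would not contradict the expansion hypothesis $|N(T_0)|\ge 2|T_0|$.
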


From Lemmas \ref{lem4} and \ref{lem6}, we obtain that $G[V_1]$ is a $(\frac{n}{1500}, 2)$-expander, and it has at least $\frac{1}{2}(\frac{n}{1500})^2$ holes depending only on $G_B[V_1]$. We define the set $\mathscr{F}$ to be those $G \in \mathscr{H}$ for which the subgraph $G[V_1]$ is not Hamiltonian. Our aim is to prove the following result.

\begin{lem}\label{lem7}
$\frac{|\mathscr{F}|}{|G(n,p)|}=o(1)$.
\end{lem}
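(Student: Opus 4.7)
I would prove Lemma \ref{lem7} via a P\'osa rotation argument combined with a two-round exposure of $G \sim G(n,p)$, in the spirit of Frieze \cite{FK}. The rationale is that the quantitative hole count from Lemma \ref{lem6} is so large ($\Theta(n^2)$) that a small amount of extra random edges must close up the longest path.

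Write $G = G_1 \cup G_2$ with $G_1 \sim G(n,p_1)$ and $G_2 \sim G(n,p_2)$ independent, subject to $(1-p_1)(1-p_2)=1-p$. Take $p_1 = (\log n + \tfrac12\alpha(n))/n$, which still has the form $(\log n + \omega(1))/n$ so that the proofs of Lemmas \ref{lem1}--\ref{lem4} apply verbatim to $G_1$; meanwhile $p_2 \gtrsim \alpha(n)/(2n)$, hence $p_2 n \to \infty$. Condition on the w.h.p.\ event $G_1 \in \mathscr{H}$; this fixes $V_1$, the deletable matching $R$, and makes $G_1[V_1]$ a connected $(n/1500,2)$-expander.

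Then use $G_2$ as a reservoir of boosters in the following iterative procedure. Set $\Gamma_0 := G_1[V_1]$. While $\Gamma_t$ is non-Hamiltonian, Lemma \ref{lem6} supplies a set $H_t$ of at least $K := \tfrac{1}{2}(n/1500)^2 = \Theta(n^2)$ holes, each a non-edge of $\Gamma_t$ whose addition either yields a Hamilton cycle or strictly extends the longest path. By the principle of deferred decisions, the status in $G_2$ of non-edges of $\Gamma_t$ not yet queried remains an independent $\mathrm{Bern}(p_2)$ family; probe the pairs of $H_t$ until one is found in $G_2$, and let $\Gamma_{t+1} := \Gamma_t \cup \{e\}$. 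The probability no such edge exists is at most $(1-p_2)^{K} \le \exp(-p_2 K) = \exp(-\Omega(n\alpha(n))) = o(1/n)$. Since each successful step increases the longest path by one or closes a Hamilton cycle, at most $n$ steps suffice, and a union bound gives overall failure probability $o(1)$. Hence $G[V_1] \supseteq \bigcup_t \Gamma_t$ is Hamiltonian w.h.p., so $|\mathscr{F}|/|G(n,p)| = \Pr[G \in \mathscr{F}] = o(1)$.

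The main obstacle is reconciling the two-round exposure with the paper's definition of $V_1$, which depends on degrees in $G$ rather than $G_1$: one must check that $G_2$ does not push many vertices across the $\log n/100$ degree threshold, which is routine since $p_2(n-1) = O(\alpha(n)) = o(\log n)$ and the count of near-threshold vertices admits the same type of first-moment bound used in Lemma \ref{lem1}. An alternative that stays inside the paper's blue/red framework would be a direct switching argument swapping each red edge of $R$ with a hole pair and iterating P\'osa rotations; this is closer to the approach in \cite{FK}, but requires the delicate combinatorial verification that each swap strictly improves the longest path, which is exactly what the cleaner two-round exposure avoids.
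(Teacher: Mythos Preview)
Your approach is correct but genuinely different from the paper's. The paper proves Lemma~\ref{lem7} by a switching (red--blue coloring) argument rather than sprinkling: it designates a deletable matching $R$ of size $r=\lceil n^{0.1}\rceil$ as ``red'' and double-counts the set $\mathscr{C}$ of colorings of graphs $G\in\mathscr{F}$ for which deleting $R$ does not shorten the longest path in $G[V_1]$. The lower bound $|\mathscr{C}|\ge |\mathscr{F}|\,(m-|V_1|)^r(1-o(1))/r!$ comes from the many choices of $R$ inside a fixed $G$; the upper bound comes from the fact that, with the blue graph fixed, the $\tfrac12(n/1500)^2$ holes supplied by Lemma~\ref{lem6} are forbidden positions for red edges. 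Dividing the two and then by $|G(n,p)|$ gives the result directly, with no need to split $p$ or worry about which round defines $V_1$. Your two-round exposure is equally standard and arguably more intuitive; it trades that cleanliness for the bookkeeping you already flag (reconciling $V_1(G)$ with $V_1(G_1)$). One further point deserves care: your per-round failure bound $(1-p_2)^{K}$ tacitly assumes all $K$ holes in $H_t$ are fresh, but after the first round some may already have been queried and exposed as non-edges of $G_2$, and $H_t$ itself is adapted to those outcomes. The fix is routine---within the first $K$ probes one sees $\mathrm{Bin}(K,p_2)$ successes with mean $\Theta(n\alpha(n))\gg n$, so the required $n$ boosters appear before any $H_t$ can be exhausted---but it should be stated.
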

\begin{proof}
Let $R$ be a set of red edges of $G$ with the property $P$ that

(i) $R$ is deletable,

(ii) $\lambda(G[V_1])=\lambda(G_B[V_1])$,

where $\lambda(H)$ is the length of a longest path in the graph $H$.

Let $\mathscr{C}$ be the set of all red-blue colorings of $\mathscr{F}$ which satisfy $P$. Let $\lambda=\lambda(G[V_1])$, we have $\lambda < |V_1|$. Recall that there are at most $\mu=\lceil n^{0.2}\rceil$ edges incident with small vertices. Set $r=|R|$. Since $R$ is a matching, we can choose it in at least
\begin{align*}
&\frac{1}{r!}(m-\lambda-\mu)(m-\lambda-\mu-2\Delta)
\ldots(m-\lambda-\mu-2(r-1)\Delta)\\
&\ge \frac{1}{r!}(m-|V_1|-\mu)(m-|V_1|-\mu-2\Delta)
\ldots(m-|V_1|-\mu-2(r-1)\Delta)\\
&\ge \frac{(m-|V_1|)^r}{r!}(1-o(1))
\end{align*}
ways, where $m$ is the number of edges in $G$, and $\Delta$ is the maximum degree of $G$. It is known that $\Delta$ is w.h.p. at most $3np$ (see e.g. \cite{B}).

Hence, $$|\mathscr{C}|\ge |\mathscr{F}|\frac{(m-|V_1|)^r}{r!}(1-o(1)).$$

Consider that we fix the blue subgraph. Then, by the definition of holes, we have to avoid replacing at least $\frac{1}{2}(\frac{n}{1500})^2$ edges when adding back the red edges in order to construct a red-blue coloring satisfying property $P$. Thus
$$|\mathscr{C}|\le \binom{\binom{n}{2}}{m-r}\binom{\binom{n}{2}
-(m-r)-\frac{1}{2}(\frac{n}{1500})^2}{r}.$$
It follows that
\begin{align*}
\frac{|\mathscr{F}|}{|G(n,p)|}&\le \frac{\sum\limits_{m=\frac{1}{100}\binom{n}{2}p}^{\binom{n}{2}}
\left[\binom{\binom{n}{2}}{m-r}\binom{\binom{n}{2}-(m-r)
-\frac{1}{2}(\frac{n}{1500})^2}{r}
\Bigg/\frac{(m-|V_1|)^r}{r!}(1-o(1))\right]}{\binom{\binom{n}{2}}{\binom{n}{2}p}}\\
&\le n^2O(e^{-\frac{r}{1500^2}+\frac{nr}{(n-1)\log n}})\\
&=o(n^{-1}).
\end{align*}
\end{proof}

\subsection{Proof of Claim \ref{claim2}}

We still assume that $G \in \mathscr{H}$ which defined in the previous subsection. Recall that a \emph{$t$-ary tree} with a designated root is a tree whose non-leaf vertices all have exactly $t$ children. For any tree $T_w$ rooted at $w$ and any vertex $x\in T_w\backslash \{w\}$, we use $P_{T_w}(w,x)$ to denote the only path from $w$ to $x$ in $T_w$. We say that $x$ is at depth $k$ of $T_w$ if $P_{T_w}(w,x)$ is of length $k$. For any tree $T_w$, denote by $L_w$ the set of leaves of $T_w$.

Let $E_1=E(G[V_1])\backslash E(C)$ and $H=(V_1, E_1)$ be a subgraph of $G$. Remember that $x_1, \ldots, x_s$ are the large vertices in $M$. Let $x_{s+1}=v$ and $y_{s+1}=u$, if $M'=M \cup \{uv\}$. For every $x_i\in\{x_1, x_2, \ldots, x_s,x_{s+1}\}$, we will build vertex-disjoint $\frac{\log n}{101}$-ary trees $T_{x_i}$ of depth $(\frac{1}{2}+\epsilon)D=(\frac{1}{2}+\epsilon)\frac{\log n}{\log \log n}$ in $H$. Hereafter, let $0<\epsilon<1$ be a sufficiently small real constant.

Note that if we successfully build such vertex-disjoint trees, then the number of leaves of each tree $T_{x_i}$ is $|L_{x_i}|=(\frac{\log n}{101})^{(\frac{1}{2}+\epsilon)D}$, for $i=1,2, \ldots, s+1$. Thus, we have
\begin{align*}
&Pr[there\  exist\ distinct\ i,\ j \ such\ that\ e(L_{x_i}, L_{x_j})=0]\\
&\le \binom{s+1}{2}(1-p)^{(\frac{\log n}{101})^{(1+2\epsilon)D}}\\
&\le n^{0.2}e^{-\frac{\log n}{n}(\frac{\log n}{101})^{(1+2\epsilon)D}}\\
&\le n^{0.2}\cdot n^{-n^\epsilon}\le n^{-\frac{1}{2}n^\epsilon}\\
&=o(1).
\end{align*}
Hence, for every $i \neq j$, there exists a path from $x_i$ to $x_j$ of length $(1+2\epsilon)D+1$ (these paths are not necessarily vertex-disjoint). Denote that path by $P_{ij}$. For every tree $T_{x_i}$, we color the edges between the vertices at depth $2\ell-1$ to $2\ell$ with color 2, and color the edges between the vertices at depth $2\ell$ to $2\ell+1$ with color 1, where $\ell=1,2,\ldots, \lfloor \frac{(\frac{1}{2}+\epsilon)D}{2}\rfloor$. Color the edges between each $L_{x_i}$ and $L_{x_j}$ ($i \neq j$) with the color different from the color used in the edges between the vertices at depth $(\frac{1}{2}+\epsilon)D-1$ to $(\frac{1}{2}+\epsilon)D$. That is, if the edges between the vertices at depth $(\frac{1}{2}+\epsilon)D-1$ to leaves are colored with color 1, then we color the edges between $L_{x_i}$ and $L_{x_j}$ with color 2; if the edges between the vertices at depth $(\frac{1}{2}+\epsilon)D-1$ to leaves are colored with color 2, then we color the edges between $L_{x_i}$ and $L_{x_j}$ with color 1. Recalling that we color edges in $M'$ with color 1, then for every $i\neq j$ the path formed by the two edges $\{x_iy_i\}$, $\{x_jy_j\}$ combining with the path $P_{ij}$ is a proper path connecting $y_i$ and $y_j$. Thus our claim follows.

Now we prove that these $(\frac{\log n}{101})$-ary trees can be constructed successfully w.h.p..

Realize first that every vertex $x$ in $H$ has degree $d_{H}(x) \ge \frac{\log n}{100}-2-2$, since there are two edges incident with $x$ in $C$ and $x$ can be adjacent to at most one small vertex plus $u$ in $G$.

For every $i=1,2,\ldots,s+1$, we build the tree $T_{x_i}$ level by level from $x_i$ to the leaves. Suppose that we are growing the tree $T_{x_j}$ from vertex $w$ at depth $k$ to vertices at depth $k+1$. Note that the construction halts if we cannot expand by the required amount. That is, we cannot find enough neighbors of $w$ in $H$ to add into the tree $T_{x_j}$, since $w$ may point to vertices already in $T_{x_i}$, $i \le j$. We call such edges as \emph{ bad edges emanating from $w$}. We claim that the number of bad edges emanating from $w$ is small. It is easy to get that at any stage, the number of vertices we used to construct trees is less than
\begin{align*}
&(s+1)\cdot (\frac{1}{2}+\epsilon)\frac{\log n}{\log \log n}\left(\frac{\log n}{101}\right)^{(\frac{1}{2}+\epsilon)\frac{\log n}{\log \log n}}\\
&\le (\frac{1}{2}+\epsilon)\frac{\log n}{\log \log n}\cdot n^{\frac{1}{2}+\frac{\epsilon}{2}} \cdot n^{0.1}\\
&\le n^{0.65}.
\end{align*}
For any fixed vertex $w$, the bad edges from $w$ is stochastically dominated by the random variable $X \sim Bin(n^{0.65},p)$. Thus,
\begin{align*}
&Pr[there\  are\ at\ least\ 10\ bad\ edges\ emanating \ from\ w]\\
&\le Pr[X\ge 10] \le \binom{n^{0.65}}{10}p^{10}\\
&\le \left(\frac{en^{0.65}}{10}\cdot \frac{\log n+\alpha(n)}{n}\right)^{10}\\
&\le (n^{-0.34})^{10}\\
&=n^{-3.4}.
\end{align*}
Using the Union Bound taking over all vertices, we have that with probability at least $1-n^{-2.4}$, any current vertex $w$ has at most 9 bad edges emanating from it. Therefore, there are at least $\frac{\log n}{100}-4-9-1 \ge \frac{\log n}{101}$ neighbors of $w$ in $H$ that can be used to continue our construction of $T_{x_j}$. Hence, w.h.p. we can successfully build such $\frac{\log n}{101}$-ary trees we required. The proof is thus complete.
\qed\\

\section{Proper connection number of general graphs}

In this section, we use a different method to derive an upper bound for the proper connection number of general graphs.
\begin{thm}\label{lem0}
Let $G=(V,E)$ be a graph. If there are two connected spanning subgraphs $G_1 = (V, E_1)$ and $G_2 = (V, E_2)$ of $G$ such that $|E_1 \cap E_2| \le t$. Then $pc(G) \le t+4$.
\end{thm}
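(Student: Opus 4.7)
The plan is to prove the bound by constructing an explicit edge coloring of $G$ with $t+4$ colors. First I would extract spanning trees $T_1\subseteq G_1$ and $T_2\subseteq G_2$, and observe that $|E(T_1)\cap E(T_2)|\le|E_1\cap E_2|\le t$. Fix any vertex $r\in V$ and regard both trees as rooted at $r$. Give each shared edge in $E(T_1)\cap E(T_2)$ its own private color from $\{5,\ldots,t+4\}$. For a non-shared $T_1$-edge $\{w,\mathrm{parent}_{T_1}(w)\}$, assign color $1$ if the depth of $w$ in $T_1$ is odd and color $2$ otherwise; color the non-shared $T_2$-edges analogously using palette $\{3,4\}$ and depths in $T_2$. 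Color the remaining edges of $G$ (outside $T_1\cup T_2$) arbitrarily, since extra edges cannot destroy existing proper paths.

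The central step is to show that any two distinct vertices $u,v$ are joined by a proper path. For this I would form the walk $W$ obtained by concatenating the $T_1$-path from $u$ to $r$ with the $T_2$-path from $r$ to $v$, and then shortcut it: let $w$ be the vertex closest to $u$ along the $T_1$-half that also lies on the $T_2$-half (possibly $w=r$), and let $P$ be the concatenation of the $T_1$-subpath from $u$ to $w$ with the $T_2$-subpath from $w$ to $v$. By the minimality of $w$, no other vertex of the $T_1$-prefix appears on the $T_2$-suffix, so $P$ is a simple $u,v$-path.

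To verify that $P$ is proper I would check three places. Within the $T_1$-portion, any two consecutive edges have depths differing by one in $T_1$, so non-shared neighbours get distinct colors from $\{1,2\}$; a shared edge has a private color outside $\{1,2\}$ which is automatically different from its neighbour's color, and two consecutive shared edges have distinct private colors. The $T_2$-portion is handled identically. At the junction $w$ the last $T_1$-edge and first $T_2$-edge are distinct (otherwise their endpoint other than $w$ would lie on both the $T_1$-prefix and the $T_2$-suffix, contradicting minimality of $w$); moreover their colors lie in the disjoint sets $\{1,2\}$ and $\{3,4\}$, or are distinct private colors, so they differ.

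The main delicate step in the plan is the junction analysis when one or both meeting edges are shared; this is exactly what forces the extra $t$ colors, since uniqueness of private colors ensures the two colors still differ. All other verifications reduce to routine case checks once the coloring is fixed.
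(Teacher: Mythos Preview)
Your approach is essentially the same as the paper's: root two spanning substructures at a common vertex, give each edge in the intersection its own private color, color the remaining edges by depth parity using palettes $\{1,2\}$ and $\{3,4\}$ respectively, and connect $u$ to $v$ by going up toward the root in the first structure and then down in the second, shortcutting where the two halves meet. The only cosmetic differences are that the paper works with BFS levels of $G_1,G_2$ and shortest paths rather than explicitly extracted spanning trees, and it switches at the first common \emph{edge} rather than the first common \emph{vertex}; your vertex-based switching is in fact a little cleaner for guaranteeing that the resulting path is simple.
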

\begin{proof}
We first color the edges in $E_1\cap E_2$ with distinct colors. For the remaining edges in $E_1\cup E_2$, we color them with four new colors different from the colors appeared in $E_1\cap E_2$. We use 1, 2, 3, 4 to denote those new colors. For any two vertices $u$ and $v$, denote the distance between them in $G_i$ by $d_i(u,v)$, where $i=1,2$. Take an arbitrary vertex $x\in V$, define the vertex sets $U_j=\{y\in V:\ d_1(x,y)=j\}$, $W_j=\{y\in V:\ d_2(x,y)=j\}$. Color the edges between $U_{2k-1}$ and $U_{2k}$ with color 1, color the edges between $U_{2k}$ and $U_{2k+1}$ with color 2, for $1\le k\le \lfloor\frac{1}{2}diam(G_1)\rfloor$. Similarly, color the edges between $W_{2\ell-1}$ and $W_{2\ell}$ with color 3, color the edges between $W_{2\ell}$ and $W_{2\ell+1}$ with color 4, for $1\le \ell \le \lfloor\frac{1}{2}diam(G_2)\rfloor$. For the edges in $E\backslash (E_1\cup E_2)$, we can color them with any colors appeared before. Clearly, this coloring uses at most $t+4$ colors.

Now we verify that this edge-coloring is a proper-path coloring of $G$. Let $u$, $v$ be any two vertices in $V$. Choose a shortest $u$-$x$ path $P_1$ in $G_1$, and a shortest $v$-$x$ path $P_2$ in $G_2$. Note that $P_1$ and $P_2$ are proper paths. If they are edge-disjoint, then $P_1\cup P_2$ is a proper path connecting $u$ and $v$. Otherwise, $P_1$ and $P_2$ intersect in edges in $E_1\cap E_2$. We can go from $u$ along $P_1$ till the first common edges, then turn to $P_2$ to reach $v$.
\end{proof}

If $G$ has two edge-disjoint connected spanning subgraphs, then we have $|E_1 \cap E_2|=0$, and therefore, $pc(G)\le 4$. The following result is a straightforward consequence of Theorem \ref{lem0}.

\begin{cor}\label{cor1}
If $G$ has two edge-disjoint spanning trees, then $pc(G)\le4$.
\end{cor}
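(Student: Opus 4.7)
The plan is to derive Corollary \ref{cor1} as an immediate specialization of Theorem \ref{lem0} with the parameter $t=0$. Specifically, if $T_1$ and $T_2$ are the two edge-disjoint spanning trees of $G$ given by hypothesis, I would set $G_1 = (V, E(T_1))$ and $G_2 = (V, E(T_2))$. Since each $T_i$ is a spanning tree, each $G_i$ is by definition a connected spanning subgraph of $G$, so the structural hypothesis of Theorem \ref{lem0} is satisfied.

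Next I would observe that edge-disjointness of $T_1$ and $T_2$ means $E(T_1) \cap E(T_2) = \emptyset$, hence $|E_1 \cap E_2| = 0$. Therefore we may apply Theorem \ref{lem0} with $t = 0$, which immediately yields $pc(G) \le 0 + 4 = 4$, as required.

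There is really no obstacle here, since all of the combinatorial work (building the proper-path coloring out of the two BFS layerings and handling the intersection edges via fresh colors) has already been done in the proof of Theorem \ref{lem0}. The only thing worth stating explicitly in the write-up is that spanning trees are in particular connected spanning subgraphs, so the invocation of the theorem is legitimate; the rest is just substituting $t=0$ into the bound.
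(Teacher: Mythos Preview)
Your proposal is correct and mirrors the paper's own treatment exactly: the paper also derives the corollary as an immediate consequence of Theorem~\ref{lem0} by taking the two edge-disjoint spanning trees as the connected spanning subgraphs $G_1$ and $G_2$, so that $|E_1\cap E_2|=0$ and hence $pc(G)\le 0+4=4$.
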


We remark that we cannot apply Corollary \ref{cor1} to random graph $G(n,p)$ when $p$ is not large enough. It is shown (see \cite{B}) that if $p=\frac{\log n+ \omega_n}{n},$
where $\omega_n\rightarrow \infty$ and $\omega_n\le \log\log\log n$,
then w.h.p. $G(n,p)$ has the minimum degree 1. Therefore, $G(n,p)$ does not have two edge-disjoint spanning trees.\\

\noindent\textbf{Acknowledgement.} We are grateful to Dr. Asaf Ferber for his suggestion which helped to improve our early result $pc(G(n,p))\le 4$ into $pc(G(n,p))\le 3$. Although his result is not the best, however, the structural properties of random graphs he suggested are very helpful to get our best result $pc(G(n,p))\le 2$.

\end{document}